\documentclass[11pt,a4paper]{article}

\usepackage[english]{babel}
\usepackage{amssymb,amsmath,amsthm}
\usepackage{amsfonts}
\usepackage{latexsym}
\usepackage{rotating}
\usepackage[]{color}
\usepackage{multirow}
\usepackage{graphicx}
\usepackage{array}
\usepackage{url}

\font\Bbb=msbm10 at 11pt

\newcommand{\RR}{\mbox{\Bbb R}}

\newcommand{\NP}{{\cal NP}}

\newtheorem{corollary}{Corollary}

\newtheorem{lemma}{Lemma}
\newtheorem{proposition}{Proposition}

\font\Bbb=msbm10 at 11pt

\def\RR{\mbox{\Bbb R}}

\def\NP{{\cal NP}}

\newcommand{\xr}[1]{(x^{#1},r^{#1})}
\newcommand{\bla}[1]{{\tilde \lambda}}
\newcommand{\bpi}[1]{{\tilde \pi}}
\usepackage[ruled,linesnumbered]{algorithm2e}

\usepackage{amsmath}
\usepackage{tikz}
\usepackage{pgfplots}

\usepackage{longtable}
\usepackage{booktabs}

\title{A Spatial Benders Algorithm for \\Delay Constrained Routing}

\author{Antonio Frangioni
        \thanks{Dipartimento di Informatica, Universit{\`a} di Pisa,
                Largo B. Pontecorvo 3, 56127 Pisa, Italy.
                E-mail: {\tt antonio.frangioni@unipi.it,~luca.mencarelli@unipi.it}}
\and    Laura Galli
        \thanks{Dipartimento di Matematica,
                Alma Mater Studiorum Università di Bologna,
                Piazza di Porta San Donato 5, 40126 Bologna, Italy.
                E-mail: {\tt l.galli@unibo.it}}
\and    Luca Mencarelli \footnotemark[1]
\and    Enrico Sorbera
        \thanks{Dipartimento di Matematica,
                Universit{\`a} di Trento,
                Via Sommarive 14, 38123 Povo, Trento, Italy.
                E-mail: {\tt enrico.sorbera@unitn.it}}
        }

\date{Draft, 26th September 2026}

\begin{document}

\maketitle

\begin{abstract}
We study the Delay Constrained Routing (DCR) problem arising in IP computer networks to support high bandwidth applications. The goal is to route IP packets, from source to destination, subject to quality of service constraints---in particular, bounding the {\em worst case delay}---while minimizing the total allocated bandwidth in the network. DCR can be formulated as a Mixed-Integer Second-Order Cone Program, and solved using standard solvers; while generally efficient, this hinges on off-the-shelf tools and does not scale to large instances. We devise a solver-free bespoke approach that uses {\em nested Benders-Lagrange}, namely, a double decomposition approach that exploits the special structure of the problem. In particular, we use a nonconvex nonlinear formulation of the problem, that results in a nonconvex Benders value function whose optimization we tackle with a bespoke {\em Spatial Branch-and-Bound} approach using custom piecewise-convex cuts. We show that our approach provides very tight upper and lower bounds in competitive running times w.r.t.~previous state-of-the-art approaches. \\*[2mm]

{\bf Keywords:} Mixed-integer non-linear programming, Benders decomposition, Lagrangian relaxation, Delay-constrained routing.
\end{abstract}

\section{Introduction}\label{sec:intro}

Computer networks provide a wide array of applications that are rich sources of challenging optimization problems. This paper is concerned with the problem of routing packets, from source to destination, in an IP network, subject to {\em Quality of Service} (QoS) constraints. In particular, real-life applications such as voice/video streaming require the {\em worst-case delay} (WCD) of any IP packet, often referred to as ``controlled end-to-end'' delay, to stay below a given threshold. The WCD of a packet depends on both the path along which it is routed, and \emph{in a nonlinear function} from the {\em reserved rate}, i.e., the portion of bandwidth allocated for that flow on the arcs of the path.

\smallskip
\noindent
We focus on the problem of simultaneously computing a path in the IP network (i.e., routing) and reserving rates along it, subject to a WCD delay constraint, in order to minimize the total amount of bandwidth (resource) used. This \emph{Delay Constrained Routing problem} (DCR) was shown in \cite{FGS2015a} to have a natural formulation as a Mixed-Integer Second-Order Cone Program (MISOCP), exploiting the \emph{Perspective Reformulation} (PR) technique \cite{FG2006}. Thus, DCR can be solved using general-purpose Mixed-Integer Non-Linear Programming (MINLP) software for many real/realistic instances, consistently improving the general performance of the telecommunication network \cite{FGS2015b}. Yet, the downside of using MINLP solvers is twofold. First, running times substantially increase with the size of the network. Second, practical implementations in a real-world setting may have issues related to software environment compatibility and licensing, when incorporating and distributing such components.

\smallskip
\noindent
We address these limitations by developing a solver-free, bespoke solution approach that exploits the specific structure of the DCR problem. Our specialised algorithm uses a novel combination of Lagrangian and Benders' decomposition, together with an effective heuristic, which often improves the running time by one order of magnitude with respect to MINLP solvers. Nesting two decomposition techniques is already an uncommon approach, in particular since the problem is nonlinear and therefore the \emph{generalized} Benders' decomposition (GBD) \cite{G1972} need be used. However, the standard GBD approach requires convexity of the formulation (and other properties). We propose a further methodological novelty by rather applying it to a \emph{nonconvex nonlinear} formulation of the problem, that gives rise to a nonconvex value function. Yet, our analysis for the problem allows us to derive a piecewise-convex lower approximation of the value function in each computed point, which leads to implementing a bespoke \emph{Spatial Branch-and-Bound} approach the likes of which does not seem to have ever been previously proposed.

\smallskip
\noindent
The paper is structured as follows. The problem is formally described in Section \ref{sec:prob}, and the relevant literature is reviewed.
In Section \ref{sec:formul} we present two MINLP formulations for the DCR problem.
In Sections \ref{sec:benders}, \ref{sec:lagsolve}, \ref{sec:approx} we present the main ingredients of our algorithm, which is described in detail in Section \ref{sec:algo}.
The results of some extensive computational experiments are given in Section \ref{sec:results}.
Finally, some concluding remarks are made in Section \ref{sec:thend}.

%
%


\section{Problem definition} \label{sec:prob}

We now formally describe the DCR problem. We model an IP computer network by a directed graph $G=(N,A)$, where $N$ is the set of {\em nodes}, representing network elements such as routers or switches, and $A$ is the set of arcs {\em arcs}, representing physical links between the nodes. Each arc $(i,j)\in A$ is characterized by:
\begin{itemize}
 \item {\it physical link speed} $w_{ij}$ (bps), the link bandwidth;
 \item {\it reservable capacity} $c_{ij}$ (bps), the portion of bandwidth available for allocation;
 \item {\it link reservation cost} $f_{ij}$, the cost of reserving one unit of capacity on the link;
 \item {\it link delay} $l_{ij}$, a (fixed) delay due to transmission and physical propagation effects over the link.
\end{itemize}
Each node $i\in N$ is characterized by a (fixed) {\it node processing delay} $n_i$, that a packet incurs when reaches a network node and it is ``processed'' before being forwarded to a queue of packets associated to the node itself.

\smallskip
\noindent
Any application running on a packed-switched network is characterized by a  {\em stream of packets} (or datagrams) originated by a source node and directed to a destination node.
The stream of packets corresponding to an application is often referred to as a {\em flow} in the computer network engineering literature.
This is consistent with the term used in the optimization literature concerning multi-commodity flow (MCF) problems, a classical paradigm for network flow models with multiple {\em commodities} (i.e., flow demands).
In a real-world context, there are usually multiple applications running at the same time, hence multiple packet flows must be simultaneously routed through the network.

\smallskip
\noindent
Although the methods developed in this paper can be applied to several DCR problems, for the sake of brevity and clarity we restrict attention to the  specific {\em Single-Flow Single-Path} (SFSP) case where a single ``new" flow must be routed on a single {\em unsplittable} path from a source $s\in N$ to a destination $d \in N \setminus \{ s \}$ \cite{FGS2015a}, assuming that the other (existing) flows have already been routed and the corresponding resource allocated, thereby reducing the capacity available on the arcs ($c_{ij}$ $\leq w_{ij}$). The flow has a \emph{deadline} $\delta$, representing a threshold, which bounds from above the maximum time that every packet is allowed to spend to be dispatched across the network. Fixed node and link delays are additive, so if we ignore the dynamic component of the problem, (SFSP) DCR reduces to the Constrained Shortest Path problem, which is already $\NP$-Hard.

\smallskip
\noindent
Furthermore, in a packed-switched network all datagrams are subject to a third type of delay, called {\em queuing delay} or {\em latency}, representing the time a packet spends in the node queue before being sent to the link $(i,j)$. The latency depends on the scheduling protocol implemented by the router to manage the queue, and on the portion of bandwidth allocated to the flow, therefore it is strictly related to the dynamic aspect of the process and cannot be represented by a constant. Indeed, the main difference between DCR and classical network optimization problems relies on the fact that, in order to model the latency of a stream of packets, one needs to capture the flow dynamics, which is clearly time-dependent. In particular, the traffic profile of the datagrams can be described as a cumulative function $A(\tau):\mathbb{R}_+\to\mathbb{R}_+$ depending on time $\tau$, called {\em arrival curve}, where $A(\tau)$ is the amount of data (i.e., number of bits) sent by the flow in the interval $[0,\tau)$.
Traffic shaping is a bandwidth management technique used in computer networks to guarantee compliance of datagrams with a desired arrival curve.
As is customary, we assume the classical {\it leaky-bucket} traffic shaper, resulting in an affine function $A(\tau)=\sigma + \rho \, \tau$ with parameters $\sigma$ and $\rho$, called, respectively, {\it burst} and {\it arrival rate}.
To have a finite latency, the reserved rate $r_{ij}$ (i.e., allocated capacity) on each arc $(i,j)$ of the path $P$ must be at least as large as the arrival rate of the flow, i.e.,
\begin{equation}\label{eq:finitedelay}
 r_{ij} \geq \rho ~~\mbox{for all}~~ (i,j) \in P.
\end{equation}
Under \eqref{eq:finitedelay}, worst-case bounds on the actual latency value can be computed using a sophisticated mathematical framework called \emph{network calculus} \cite{LMMS06} for schedulers that work according to the {\em Generalized Processor Sharing} (GPS) \cite{PG1993} paradigm. In particular, for the {\em Strictly Rate-Proportional} (SRP) scheduling protocol \cite{FGS2015a,FGS2015b}, the latency expression for an arc $(i,j)$ is given by
\begin{equation}
 \frac{L}{r_{ij}} + \frac{L}{w_{ij}},
 \label{eq:thetaSRP}
\end{equation}
where the constant $L$ is the \emph{maximum transmit unit} (i.e., the maximum size of any packet). Other GPS-like scheduling protocols give rise to similar formul{\ae}, yet the SRP protocol is the most common practical implementation for GPS. Also, an advantage of the SRP protocol is that the latency of a flow only depends on its path and on the reserved rates along the path: hence, the new flow does not influence the other existing flows, and the only interaction between flows consists in consuming the available capacity on the arcs. Other scheduling protocols, instead, require the introduction of \emph{admission control} constraints due to the impact that each flow has on the feasibility of the other flows \cite{FGS2017a, FGS2017b}. Furthermore, the latency function \eqref{eq:thetaSRP} is convex with respect to the reserved rate $r_{ij} \geq 0$. In fact, the $L \,/\, w_{ij}$ component in \eqref{eq:thetaSRP} is an additive constant term like the fixed delays $n_i$ and $l_{ij}$; therefore, to ease notation, for each arc $(i,j)$ we only distinguish the constant delay ${\bar l}_{ij} = L \,/\, w_{ij} + l_{ij} + n_i$, and the latency  $\theta_{ij} = L \,/\, r_{ij}$ (i.e,  the {\em variable} delay that depends on the {\em reserved rate} $r_{ij}$).

\smallskip
\noindent
With the above assumptions, the WCD of a flow for a path $P$ is given by the closed formula
\begin{equation}
 \frac{\sigma}{r_{\mathrm{min}}} +
 \sum_{(i,j) \in P} \big( \, \theta_{ij} + {\bar l}_{ij} \, \big)
 \label{eq:gendelay}
\end{equation}
where $r_{\mathrm{min}} = \min \{ \, r_{ij} \,:\, (i,j) \in P \}$ captures the minimum reserved rate along the path. A (SFSP) DCR instance is then given by: ($i$) a digraph $G=(N,A)$; ($ii$) a constant delay ${\bar l}_{ij}$, a bandwidth $w_{ij}$, a capacity $c_{ij}$, and a cost $f_{ij}$, for each arc $(i,j) \in A$; ($iii$) a burst parameter $\sigma$ and a rate parameter $\rho$ for the leaky-bucket arrival curve of the flow; ($iv$) a deadline $\delta > 0$. The task is to find an $s$-$d$ path and to reserve rate along its arcs, in order to minimize the total reservation cost, subject to a constraint stating that the WCD, defined by \eqref{eq:gendelay}, must not exceed $\delta$.
%

\section{Formulations} \label{sec:formul}

According to \eqref{eq:gendelay}, the key elements of the WCD of a flow are
\begin{itemize}
  \item the chosen $s$-$d$ {\em path} $P$ in $G$ (a discrete decision);
  \item the \emph{reserved rate}  $0 \leq r_{ij} \leq c_{ij}$ for the arcs in $P$ (continuous decisions).
  %
\end{itemize}
This implies that an algebraic formulation of the problem must contain two types of variables for each $(i,j) \in A$: the former decision can be described using binary variables $x_{ij} \in \{0,1\}$, while the latter corresponds to continuous reserved rate variables $r_{ij}$. Since \eqref{eq:gendelay} contains nonlinear terms with respect to the $r_{ij}$, the (SFSP) DCR problem can be modeled as a MINLP. As typical, the details of the formulation do matter, though.


\subsection{Initial MINLP / MISOCP formulation}\label{ssec:misocp}

In the (SFSP) DCR formulation from \cite{FGS2015a}, the above $x_{ij}$ and $r_{ij}$ are complemented by an auxiliary variable $z$ that captures the minimum reserved rate along the path, i.e., $r_{\mathrm{min}}$ in \eqref{eq:gendelay}; this yields
\begin{align}
 \min \, &  \textstyle \sum_{(i,j)\in A} {f_{ij}r_{ij}}
 & \label{eq:of}\\
 &\textstyle
  \sum_{(j,i)\in BS(i)}{x_{ji}} - \sum_{(i,j)\in FS(i)}{x_{ij}} = b_i
  & i \in N \label{eq:cflow} \\
 &
 \frac{\sigma}{z} +
 \sum_{(i,j) \in A} \Big( \frac{L{x_{ij}}^2}{r_{ij}} + {\bar l}_{ij} x_{ij} \Big)
 \le \delta  & \label{eq:cdelay} \\
 %
 %
 %
 & x_{ij} \in \{0 \,, 1\} &(i,j)\in A \label{eq:cvarx}\\
 & \rho x_{ij} \leq r_{ij} \leq c_{ij} x_{ij}
 & (i,j)\in A \label{eq:sfspbound1} \\
 & \rho \leq z \leq r_{ij} + c_{max} ( 1 - x_{ij} )
 & (i,j)\in A \label{eq:sfsprmin1}
 \end{align}

\noindent
where $b_i$ is $-1$ if $i = s$, $1$ if $i = d$, and $0$ otherwise. The objective function \eqref{eq:of} represents the total reservation cost. The standard flow conservation constraints \eqref{eq:cflow} describe a simple $s$-$d$ path in $G$. The constraint \eqref{eq:cdelay} imposes the WCD constraint \eqref{eq:gendelay}.
Note that, for all arcs $(i,j) \in A$, one needs to impose the following logical conditions on the reserved rates
\begin{equation}\label{ratescond}
\begin{cases}
 r_{ij} = 0 & \text{if } x_{ij}=0 \\
 \rho \leq z \leq r_{ij} \ \ &\text{otherwise}
\end{cases}
\end{equation}
and on the latencies
\begin{equation}\label{eq:latency}
 \theta_{ij}(r_{ij},x_{ij}) = \begin{cases}
                0 \ \ &\text{if } x_{ij}=0\\
                L \,/\, r_{ij} & \text{otherwise}
                \end{cases}
\end{equation}
That is, both $r_{ij}$ and $\theta_{ij}$ are \emph{semi-continuous} variables governed by the same binary variable $x_{ij}$. This is obtained by the {\em Big-M} constraints \eqref{eq:sfspbound1}--\eqref{eq:sfsprmin1}, with the proper ``large'' constant $c_{max} = \max_{(i,j) \in A}\{c_{ij}\}$, combined with the {\em Perspective Reformulation} (PR) technique \cite{FG2006} that yields the (convex) term $Lx_{ij}^2 \,/\, r_{ij}$ in \eqref{eq:cdelay}, which is the \emph{convex envelope} of the (nonconvex) latency term \eqref{eq:latency}. Thus, both nonlinear terms $Lx_{ij}^2 \,/\, r_{ij}$ and $\sigma \,/\, z$ in \eqref{eq:cdelay} are convex; in particular, they can be expressed (introducing auxiliary epigraphical variables) {as conic constraints} \cite{FGS2015a}, which yields a MISOCP formulation of the problem that can be tackled by general-purpose solvers like {\tt Cplex} and {\tt Gurobi}.
%

\subsection{Modified / Nonconvex MINLP formulation}\label{ssec:minlp}

An alternative formulation can be obtained by substituting \eqref{eq:sfspbound1}--\eqref{eq:sfsprmin1} with
\begin{align}
 & z\, x_{ij} \le r_{ij} \le c_{ij}\,x_{ij} & (i,j)\in A  \label{eq:crmin} \\
 & \rho \leq z \leq c_{max}  \label{eq:cvarr}
\end{align}
The two models differ in the way they capture the minimum allocated rate by linking the auxiliary variable $z$ with the reserved rate variables $r_{ij}$. This modification turned out to be crucial for the specific algorithmic technique that we developed. Note that the constraints \eqref{eq:sfspbound1}--\eqref{eq:sfsprmin1} are linear, while the constraints \eqref{eq:crmin} are nonlinear and nonconvex, hence the corresponding MINLP is nonconvex. In order to see the advantage of using the latter, which may seem a complication, it is useful to think of the auxiliary variable $z$ as a parameter, i.e., the variable $z$ is fixed to a certain value in $[ \, \rho \,,\, c_{max} \, ]$.

\begin{proposition}\label{prop:strong}
Assume $z$ is fixed in $[ \, \rho \,,\, c_{max} \, ]$, let ``NLP1(${z}$)''  be the continuous relaxation of the MINLP \eqref{eq:of}--\eqref{eq:sfsprmin1} and ``NLP2(${z}$)'' that of the NLP formed by replacing \eqref{eq:sfspbound1}--\eqref{eq:sfsprmin1} with \eqref{eq:crmin}: then, the lower bound from NLP2($z$) is at least as strong as the lower bound from NLP1($z$), and it can be strictly stronger.
\end{proposition}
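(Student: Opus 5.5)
We prove the claim by feasible-region inclusion: with $z$ fixed, both relaxations have the same objective \eqref{eq:of}, the same flow constraints \eqref{eq:cflow}, the same delay constraint \eqref{eq:cdelay} (where $\sigma/z$ is now a constant), and $x_{ij}\in[0,1]$. They differ only in the linking constraints. So it suffices to show that every $(x,r)$ feasible for NLP2($z$) is feasible for NLP1($z$). Then the minimum of NLP2($z$) is at least that of NLP1($z$). Strictness will follow from an explicit instance.

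\textbf{Inclusion.} Take $(x,r)$ satisfying \eqref{eq:crmin} with $z\in[\rho,c_{max}]$ and $0\le x_{ij}\le 1$.
\begin{itemize}
\item The upper bound $r_{ij}\le c_{ij}x_{ij}$ appears in both \eqref{eq:crmin} and \eqref{eq:sfspbound1}.
\item Since $z\ge\rho$ and $x_{ij}\ge0$, we get $r_{ij}\ge z x_{ij}\ge \rho x_{ij}$, which is the lower bound in \eqref{eq:sfspbound1}.
\item For \eqref{eq:sfsprmin1} we need $z\le r_{ij}+c_{max}(1-x_{ij})$. From $r_{ij}\ge z x_{ij}$ we obtain $r_{ij}+c_{max}(1-x_{ij})\ge z x_{ij}+c_{max}(1-x_{ij})$. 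The right-hand side is a convex combination of $z$ and $c_{max}$, hence it is $\ge z$ because $z\le c_{max}$.
\item $\rho\le z$ is given.
\end{itemize}
Thus $(x,r)$ is feasible for NLP1($z$), proving the weak inequality.

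\textbf{Strictness.} The Big-M constraint \eqref{eq:sfsprmin1} becomes almost vacuous for fractional $x_{ij}$, so NLP1 can set $r_{ij}=\rho x_{ij}$. The natural construction is a small graph, for instance two parallel $s$-$d$ arcs, or a single arc with some slack, with $z>\rho$ fixed and $x_{ij}=1/2$ split across the arcs. In NLP1 the cheapest choice is $r_{ij}=\rho x_{ij}$, provided $z\le \rho x_{ij}+c_{max}(1-x_{ij})$, which holds when $c_{max}$ is large. In NLP2 the constraint forces $r_{ij}\ge z x_{ij}>\rho x_{ij}$.

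The delay constraint must also be satisfied. The term $Lx_{ij}^2/r_{ij}=Lx_{ij}/\rho$ at $r_{ij}=\rho x_{ij}$ has to fit within $\delta-\sigma/z-\sum\bar l_{ij}x_{ij}$, so we choose $\delta$ large, which makes the delay constraint slack. Then the NLP1 optimum equals $\min\sum f_{ij}\rho x_{ij}$, since the flow constraint gives $\sum x=1$ on the parallel arcs; this yields $\rho\min f$. By the same token, the NLP2 optimum is $z\min f>\rho\min f$. One can even use a single arc with $x=1$: NLP1 needs $r\ge\rho$ and $z\le r$, so it gives $r=z$. That coincides with NLP2, so integrality would remove the gap; fractionality is the key.

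A clean choice is two parallel arcs with equal cost $f$, $c_{ij}=c_{max}$, and the NLP1 solution $x=(1/2,1/2)$, $r_{ij}=\rho/2$. This requires $z\le \rho/2+c_{max}/2$, which is fine for $\rho<z\le(\rho+c_{max})/2$. NLP1 then gives cost $f\rho$, while NLP2 gives at least $f z$. The main obstacle is only checking that the delay constraint is not the binding one in both relaxations, and that the NLP2 lower bound $fz$ is attained or at least not undercut. A large $\delta$ handles the first. For the second, the NLP2 cost is always $\ge z\sum f_{ij}x_{ij}\ge z f$ by \eqref{eq:crmin}, so the gap follows directly.
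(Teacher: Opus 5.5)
Your inclusion argument matches the paper's. The paper splits into $x_{ij}=0$ and $x_{ij}>0$ and chains $z \le r_{ij}+z(1-x_{ij})\le r_{ij}+c_{max}(1-x_{ij})$, which is your convex-combination bound $zx_{ij}+c_{max}(1-x_{ij})\ge z$ done case by case.

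The genuine difference is in the strictness part. The paper exhibits a single-arc point, $(x_{ij},r_{ij})=(0.5,0.5)$ with $\rho=1$, $z=5$, $c_{ij}=10$, $c_{max}=100$, that satisfies \eqref{eq:sfspbound1}--\eqref{eq:sfsprmin1} but violates \eqref{eq:crmin}. This is quick and shows structurally that the new linking set is strictly tighter. However, strict inclusion of feasible regions does not by itself give a strictly larger optimal value, and the paper specifies no graph, costs or deadline. Your two-path instance closes exactly that gap. NLP1 attains $f\rho$ at $x=(1/2,1/2)$, $r=(\rho/2,\rho/2)$ when $\rho<z\le(\rho+c_{max})/2$ and $\delta$ is large. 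Every NLP2-feasible point costs at least $z\sum f_{ij}x_{ij}=fz$. Your observation that fractional $x$ is indispensable is also correct: for integral $x$ the two linking systems coincide, which is why the paper's witness has $x_{ij}=0.5$.

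To make it airtight, state $f>0$; otherwise both values are $0$. Also, the paper indexes arcs by node pairs, so parallel arcs are not representable. Replace them by an arc $(s,d)$ of cost $2f$ and a path $s\to m\to d$ with arc costs $f$; the same computation then gives $2f\rho$ versus at least $2fz$.

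Your preliminary claim that NLP1 reaches $\rho\min f$ with unequal costs is not exact. At $x_{ij}=1$ the Big-M constraint forces $r_{ij}\ge z$, so the cheap arc cannot carry all the flow at rate $\rho$. The equal-cost instance you actually use avoids this issue.
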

\begin{proof}
It suffices to show that any feasible pair $(x^*_{ij},r^*_{ij})$ to \eqref{eq:crmin} is also a feasible pair to \eqref{eq:sfspbound1}--\eqref{eq:sfsprmin1}. From \eqref{eq:crmin} and by definition of $z$ we have
\[
 z\, x^*_{ij} \le r^*_{ij}\le c_{ij}\,x^*_{ij} \implies
 \rho x^*_{ij} \leq r^*_{ij} \leq c_{ij} x^*_{ij}.
\]
This proves that $(x^*,r^*)$ satisfies \eqref{eq:sfspbound1}. To show that the inequalities \eqref{eq:sfsprmin1} are satisfied, we distinguish two cases. If $x^*_{ij} = 0$, then $r^*_{ij}=0 \implies z \leq c_{max}$, which is true by definition of $z$. If $x^*_{ij} > 0$, we have
\[
\begin{split}
z x^*_{ij} & \leq r^*_{ij}\\
z x^*_{ij} + z (1 - x^*_{ij}) & \leq r^*_{ij} + z (1 - x^*_{ij})\\
z & \leq r^*_{ij} + z (1 - x^*_{ij}) \leq r^*_{ij} + c_{max} (1 - x^*_{ij})
\end{split}
\]
where the first inequality comes from \eqref{eq:crmin}, the second inequality is true by construction, and the third inequality comes from the definition of $z$.
From this it follows that $(x^*,r^*)$ also satisfies \eqref{eq:sfsprmin1}.

\smallskip
\noindent
To show that the constraints \eqref{eq:crmin} in NLP2($z$) are strictly stronger than the constraints \eqref{eq:sfspbound1}--\eqref{eq:sfsprmin1} in NLP1(${z}$) consider an instance with $\rho=1$, $z=5$, $c_{max}=100$, $c_{ij}= 10$. One can check that the pair $(0.5,0.5)$ satisfies \eqref{eq:sfspbound1}--\eqref{eq:sfsprmin1}, but does not satisfy \eqref{eq:crmin}.
\end{proof}

\noindent
The above result is our main motivation for using the \eqref{eq:crmin}--\eqref{eq:cvarr} constraints instead of the \eqref{eq:sfspbound1}--\eqref{eq:sfsprmin1} constraints. Furthermore, as will be shown in the next section, our solution approach uses a ``non standard''  {\em Benders' decomposition} (BD), which turns out to be particularly effective when applied to the modified MINLP formulation.

\section{Benders' decomposition} \label{sec:benders}

We now present our Benders-based solution approach for the DCR problem.
%
%
Both MINLP formulations described in Section \ref{sec:formul} lend themselves well to BD since the resulting MINLPs are much easier to solve when the $z$ variable is fixed. Yet, as shown in Proposition \ref{prop:strong}, the {\em modified} MINLP formulation is stronger than the {\em initial} MISOCP, when $z$ is fixed. Therefore, we focus on the MINLP formulation \eqref{eq:of}--\eqref{eq:cvarx}, \eqref{eq:crmin}, \eqref{eq:cvarr}. It will be sometimes convenient to consider the two inequalities involved in \eqref{eq:crmin} separately:
\begin{subequations}\label{eqn:crmin-split}
 \begin{align}
  & z x_{ij} \leq r_{ij} & (i,j)\in A \label{subeqn-1:crmin-split}
    \tag{12a}\\
  & r_{ij} \leq c_{ij} x_{ij} & (i,j)\in A \label{subeqn-2:crmin-split}
    \tag{12b}
 \end{align}
\end{subequations}
A natural way to apply BD is to view $z$ as a ``complicating'' variable on which to {\em project} our problem: that is, one keeps the $z$ variable in the {\em master problem}
\begin{equation}
 \min \big\{ \, v(z) \,:\, \eqref{eq:cvarr} \, \big\}
 \label{eq:z-project-minlp}
\end{equation}
and moves all the $x$ and $r$ variables to the {\em subproblem}
\begin{equation}\label{eq:val-fun-minlp}
 \textstyle
 v(z) = \min \Big\{ \, \sum_{(i,j)\in A} {f_{ij}r_{ij}} \,:
	 \mbox{\eqref{eq:cflow}--\eqref{eq:cvarx}} \;,\;
	 \eqref{eq:crmin} \, \Big\}
\end{equation}
that computes the corresponding {\em value function} $v(z)$. Given that the subproblem is nonlinear, this requires {\em Generalized Benders Decomposition} (GBD) \cite{G1972}, that relies on Lagrangian duality to derive families of cuts corresponding to those in the BD case. In particular, if the subproblem is convex (and satisfies some regularity conditions), GBD exploits strong duality to reformulate the value function in terms of its dual representation. The key idea of GBD is to ``dualize'' the linking constraints between the complicating variables and the other variables. Besides convexity, GBD also requires other strong structural properties of the linking constraints, the simplest one being {\em separability}. For brevity, we do not describe this approach in detail since, in our case, the subproblem lacks both separability and convexity, hence the standard GBD strategy cannot be directly applied: rather, the underlying idea of using Lagrangian relaxation inside the Benders' approach is played in a creative way.

\subsection{A nested Benders-Lagrange approach}\label{ssec:quasiBend}

In this section we describe an alternative BD approach for the modified MINLP formulation \eqref{eq:of}--\eqref{eq:cvarx}, \eqref{eq:crmin}, \eqref{eq:cvarr}.
Instead of following the classical GBD strategy, we dualize the constraints \eqref{eq:cflow} and \eqref{eq:cdelay} with Lagrangian multipliers $[\,\pi_i\,]_{i \in N}$ and $\lambda \geq 0$, respectively, to obtain:
\begin{align}
 \psi(\pi, \lambda; z) =
 & \textstyle
   \min \Big\{ \sum_{(i,j) \in A} f_{ij}r_{ij} +
               \lambda \big( \frac {L {x_{ij}}^2}{r_{ij}} + {\bar l}_{ij} x_{ij} \big) + (\pi_j - \pi_i) x_{ij}
               \,:\, \eqref{eq:cvarx} \,,\, \eqref{eq:crmin} \, \Big\}
   \nonumber \\
 & \textstyle
   - \sum_{i \in N} \pi_i b_i -\lambda \delta + \frac{\lambda \sigma}{z}
 \label{eq:Lagfix-minlp-alt}
\end{align}
When $z$ is fixed, the expression in \eqref{eq:Lagfix-minlp-alt} as a function of $(\pi, \lambda)$ represents the Lagrangian function, and
\begin{equation}
 \underline{v}(z) =
 \max \big\{ \, \psi(\pi, \lambda; z) \,:\, \pi \in \RR^{|N|} \;,\;
                \lambda \geq 0 \, \big\}
\label{eq:LagDfix-minlp-alt}
\end{equation}
the corresponding Lagrangian dual. Note, however, that the $x$ variables in \eqref{eq:Lagfix-minlp-alt} are binary, making the subproblem nonconvex. Hence, by constructing the Lagrangian dual, we obtain a {\em convex relaxation} of \eqref{eq:val-fun-minlp}. In particular, \eqref{eq:LagDfix-minlp-alt} gives a {\em lower approximation} of the original value function $v(z)$.  Hence, by solving the {\em relaxed master problem}
\begin{equation}
	\min \{ \, {\underline v}(z) \,:\, \eqref{eq:cvarr} \, \}
	\label{eq:z-project-minlp-alt-rel}
\end{equation}
we obtain a lower bound on the optimal value of our problem.
Owing to the bilinear term in \eqref{subeqn-1:crmin-split}, the minimization problem in \eqref{eq:Lagfix-minlp-alt} still lacks separability with respect to $z$, yet the advantage of this relaxation is twofold:
\begin{itemize}
 \item the Lagrangian dual \eqref{eq:LagDfix-minlp-alt} can be efficiently solved;
 \item despite being nonconvex, \eqref{eq:Lagfix-minlp-alt} has a ``nice'' structure that allows to construct piecewise linear (lower) approximations for it, that can be used as Benders' cuts.
\end{itemize}
In the next sections we describe how to exploit these two features.

\section{Solving the Lagrangian dual}\label{sec:lagsolve}

The approach is based on the fact that the only multiplier that is really nontrivial to find is $\lambda^*$, i.e., the optimal solution of the \emph{univariate} Lagrangian dual
\begin{equation}
 \underline{v}_1(z) =
 \max \{ \, \psi_{1}(\lambda; z) \,:\, \lambda \geq 0 \, \}
 \label{eq:LagDfix-minlp-alt-bis}
\end{equation}
with
\begin{align}
  \psi_1(\lambda; z) = &
  \textstyle
  \min \Big\{ \, \sum_{(i,j)\in A} {f_{ij}r_{ij}} +
                 \lambda \big( \frac {L {x_{ij}}^2}{r_{ij}}
                                + {\bar l}_{ij} x_{ij} \big)
              \;:\; \eqref{eq:cflow} \,,\, \eqref{eq:cvarx} \,,\,
                    \eqref{eq:crmin} \, \Big\}
  \nonumber \\
  & \textstyle
    -\lambda \delta + \frac{\lambda \sigma}{z}
    \label{eq:Lagfix-minlp-alt-bis}
\end{align}
The important property of \eqref{eq:Lagfix-minlp-alt-bis} is that, when $\lambda $ is fixed, we can ``project out'' the $r_{ij}$ variables onto the $x$--space. If $x_{ij}=0$, the corresponding reserved rate $r_{ij}$ is set to zero. If $x_{ij}=1$, the corresponding optimal ${\bar r}_{ij}$ is given by the one-dimensional convex optimization problem
\begin{equation}
 \textstyle
 \min \Big\{ \, h(r_{ij}) = f_{ij}r_{ij} +
                \lambda \big( \frac{L}{r_{ij}} + {\bar l}_{ij} \big)
             \;:\; z \leq r_{ij} \leq c_{ij} \, \Big\}
 \label{eq:rijproject}
\end{equation}
which, from ordinary first-order calculus {(and assuming $f_{ij} \geq 0$)},  gives
\begin{equation}
 \textstyle
 {\bar r}_{ij}=
 \min \Big\{ \, c_{ij} \,,\,
                \max\big\{ \, z \,,\, {\bar v} =
                                      \sqrt{\lambda L \,/\, f_{ij}}
                           \, \big\} \, \Big\}
 \label{SPopt}
\end{equation}
Projection onto the $x$--space allows us to reformulate \eqref{eq:Lagfix-minlp-alt-bis} as a {\em Shortest Path Problem} (SPP)
\begin{equation}
 \textstyle
 - {\lambda} \delta + \frac{{\lambda} \sigma}{z}
 + \min \big\{  \sum_{(i,j)\in A}\tilde{c}_{ij} x_{ij} :
                  \eqref{eq:cflow} \,,\, \eqref{eq:cvarx} \big\}
 \label{eq:SP}
\end{equation}
with {non-negative arc costs} $\tilde{c}_{ij}= h({\bar r}_{ij})$. Since $\psi_{1}(\lambda; z)$ can be computed efficiently for any value of ${\lambda}$, the Lagrangian dual \eqref{eq:LagDfix-minlp-alt-bis} can also be solved efficiently. In particular, \eqref{eq:LagDfix-minlp-alt-bis} is a one-dimensional optimization problem whose objective to be maximized is a concave non-differentiable function, so we can use a {\em line search} as an adaptation of the classical Kelley's Cutting Plane algorithm \cite{K1960} to the one-dimensional case. For any $\lambda \geq 0$, we denote by $\bigl(x(\lambda)\,,\, r(\lambda)\bigr)$ the corresponding optimal solution to the Lagrangian problem in \eqref{eq:Lagfix-minlp-alt-bis}: the subgradient of $\psi_1(\lambda; z)$ in $\lambda$ is
\begin{equation}
 \textstyle
 s\bigl( x(\lambda) \,,\, r(\lambda) \bigr) =
 \frac{\sigma}{z} -\delta +
 \sum_{(i,j)\in A} \frac{Lx^2_{ij}}{r_{ij}} + \bar{l}_{ij} x_{ij}
 \in \partial \psi_1(\lambda; z) \; .
 \label{eq:grad}
\end{equation}
At each iteration of the {\em line search}, we need two values of $\lambda$, $0 \leq \lambda^+ < \lambda^-$, such that $s( x(\lambda^+) \,,\, r(\lambda^+)) > 0$ and $s( x(\lambda^-) \,,\, r(\lambda^-) ) < 0$. The two subgradients are used, respectively, to construct two lines $\ell^+(\lambda)$ and $\ell^-(\lambda)$, which {\em upper} approximate the Lagrangian function $\psi_{1}(\lambda; z)$. Next, we compute the point $\hat{\lambda}$ corresponding to the intersection of the two lines and the corresponding solution $\bigl(x(\hat{\lambda})$, $r(\hat{\lambda})\bigr)$, which either replaces  $\lambda^-$ or $\lambda^+$, depending on the sign of the subgradient $s\bigl(x(\hat{\lambda}) \,,\, r(\hat{\lambda}) \bigr)$. The process is iterated until $\ell^+({\hat \lambda}) = \ell^-({\hat \lambda}) = \psi_{1}({\hat \lambda}, z)$, which implies optimality of ${\hat \lambda} = \lambda^*$. The approach is completely standard (cf.~e.g.~\cite{FGS2024} and the references therein) and we omit the details for brevity. Note that anytime we find a multiplier $\lambda^-$ with negative subgradient, the corresponding constraint \eqref{eq:cdelay} is satisfied in $( \, x(\lambda^-) \,,\, r(\lambda^-) \, )$, and thus represents a feasible solution for our problem. Hence, the solution of the Lagrangian dual \eqref{eq:LagDfix-minlp-alt-bis} also doubles as a matheuristic that produces upper bounds.

\subsection{Computing optimal primal and dual solutions}\label{ssec:kkt}

The crucial issue for the application of Benders' approach is the ability to compute the dual optimal solutions that enter into the Benders' cuts. This requires first and foremost to properly identify the \emph{convexified relaxation} of the original problem \cite{LR01,F2005} that the Lagrangian dual solves. We will now detail how to find, by means of a convex combination of the (two) integer solutions at $\lambda^*$, the optimal \emph{primal} solution of that problem, on our route for the crucial optimal \emph{dual} solution.

\smallskip
\noindent
Let $\xr{1}, \xr{2}$ be the two solutions at the last iteration of the line search, i.e., two different optimal solutions to $\psi_1(\lambda^*; z)$ such that the corresponding sub-gradients (with opposite sign) $s\xr{1},\, s\xr{2}$ both belong to $\partial \psi_1(\lambda^*; z)$. We define $\xr{\nu} = \nu \xr{1} + ( 1 - \nu ) \xr{2}$ their convex combination for some $\nu \in [0,1]$. By optimality of $\lambda^*$ we have
\[
 0 \in \partial \psi_1(\lambda^*; z)
 \implies \exists\,\, \nu^* \in [0,1] \;:\;
 \nu^* s\xr{1} + (1 - \nu^*) s\xr{2} = 0 \;.
\]
One would therefore naturally look at the solution
\begin{equation}\label{eq:xtilde}
 (\tilde x, \tilde r) = \xr{\nu^*} = \nu^* \xr{1}+( 1 - \nu^* )\xr{2}
\end{equation}
as the candidate solution for the convexified relaxation, that we denote by $\tilde P(z)$. This would certainly be the right answer if the relaxed constraints were linear \cite{F2005}, but in our case they are not and the answer is generally more complex \cite{LR01}. Yet, we can show that the result actually holds, starting from the following Lemma that characterizes optimal solutions to the Lagrangian relaxation $\psi_1(\lambda; z)$ for any given $\lambda$ (and therefore, in particular, to $\xr{1}$ and $\xr{2}$):

\begin{lemma}\label{lem:struct}
Any optimal solution $\xr{h}$ to $\psi_1(\lambda; z)$ satisfies $r^h = x^h \cdot {\bar r}.$
\end{lemma}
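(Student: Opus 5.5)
The plan is to exploit the fact that, for fixed $\lambda \geq 0$ and $z$, the objective of the Lagrangian problem in \eqref{eq:Lagfix-minlp-alt-bis} is separable by arc once $x$ is fixed. The only constraints coupling $r$ to anything are \eqref{eq:crmin}, and they act arc by arc. I will therefore argue componentwise: take any optimal $\xr{h}$ and fix an arc $(i,j)\in A$. Then I show that $r^h_{ij} = x^h_{ij}\,{\bar r}_{ij}$, where ${\bar r}_{ij}$ is given by \eqref{SPopt}.

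The first case is $x^h_{ij}=0$. Constraint \eqref{eq:crmin} then reads $0 \le r^h_{ij} \le 0$, so $r^h_{ij}=0 = x^h_{ij}{\bar r}_{ij}$ with no optimality argument needed. The convention $Lx_{ij}^2/r_{ij}=0$ at $x_{ij}=r_{ij}=0$, which is the usual perspective closure, is understood here.

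The second case is $x^h_{ij}=1$. Here \eqref{eq:crmin} reduces to $z \le r_{ij}\le c_{ij}$, and the part of the objective depending on $r_{ij}$ is exactly $h(r_{ij})$ from \eqref{eq:rijproject}. The other terms, namely the other arcs, $-\lambda\delta$ and $\lambda\sigma/z$, do not depend on $r_{ij}$. Holding $x^h$ and all other components of $r^h$ fixed, any $r_{ij}\in[z,c_{ij}]$ therefore gives a feasible solution. Optimality of $\xr{h}$ forces $r^h_{ij}$ to minimize $h$ over $[z,c_{ij}]$.

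The main point is uniqueness of that minimizer, which is needed to conclude $r^h_{ij}={\bar r}_{ij}$ rather than just "some minimizer". If $\lambda>0$, then $h''(r)=2\lambda L/r^3>0$ on $r>0$, and $z\ge\rho>0$, so $h$ is strictly convex on $[z,c_{ij}]$. Its unique minimizer is the projection of the stationary point $\sqrt{\lambda L/f_{ij}}$ onto the interval, i.e.\ \eqref{SPopt}. The cases needing care are $\lambda=0$ and $f_{ij}=0$. With $\lambda=0$ and $f_{ij}>0$, $h$ is strictly increasing, so the unique minimizer is $z$; this matches \eqref{SPopt}, since $\bar v=0$ gives $\max\{z,0\}=z$. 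With $f_{ij}=0$ and $\lambda>0$, $h$ is strictly decreasing, so the minimizer is $c_{ij}$; this matches \eqref{SPopt} with $\bar v=+\infty$. With $f_{ij}=\lambda=0$ the minimizer is not unique. I would handle this degenerate case by noting that \eqref{SPopt} is then a conventional choice, and I would state the lemma under the tacit assumption (consistent with the line search, which only evaluates $\lambda^*$ with the solutions it constructs) that ${\bar r}_{ij}$ is the value used. Equivalently, I would assume $f_{ij}>0$ or $\lambda>0$.

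Combining the two cases over all arcs gives $r^h = x^h\cdot{\bar r}$ componentwise.
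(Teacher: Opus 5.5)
Your proof is correct and follows the same two-case argument as the paper: \eqref{eq:crmin} forces $r^h_{ij}=0$ when $x^h_{ij}=0$, and when $x^h_{ij}=1$ separability reduces the choice of $r^h_{ij}$ to the one-dimensional problem \eqref{eq:rijproject}. Your explicit uniqueness argument (strict convexity of $h$ for $\lambda>0$, strict monotonicity in the edge cases) fills in a step the paper's one-line proof takes for granted, and you correctly note that the degenerate case $f_{ij}=\lambda=0$ requires assuming $f_{ij}>0$ or $\lambda>0$, or a convention on ${\bar r}_{ij}$.
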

\begin{proof}
If $x^h_{ij} = 0$, constraints \eqref{eq:crmin} imply $r^h_{ij} = 0$.
When $x^h_{ij} = 1$, since the optimal value of \eqref{eq:rijproject} only depends on  $\lambda$, then $r^h_{ij} = {\bar r}_{ij}$.
\end{proof}

\noindent
Due to this ``special'' structure, the sub-gradient function \eqref{eq:grad} \emph{behaves linearly} with respect to convex combinations of $\xr{1}$ and $\xr{2}$. To prove it, we start by constructing a partition of the arc set $A$:
\begin{align*}
 & A_1 := \{\, (i,j)\in A \ :\ x^1_{ij} = 1 \,,\, x^2_{ij} = 0 \, \}\\
 & A_2 := \{\, (i,j)\in A \ :\ x^1_{ij} = 0 \,,\, x^2_{ij} = 1 \, \}\\
 & A_{\cap} := \{ \, (i,j)\in A \ :\ x^1_{ij} = x^2_{ij} = 1 \, \}\\
 & A_{\emptyset} := \{ \, (i,j)\in A \ :\ x^1_{ij} = x^2_{ij} = 0 \, \}
\end{align*}
and we define the function
\[
 \textstyle
 F_{A'}(x,r) =
 \sum_{(i,j)\in A'} \frac{Lx_{ij}^2}{r_{ij}}+  \bar{l}_{ij} x_{ij}
\]
representing the delay contribution of a subset of arcs $A' \subseteq A$ for a given solution $(x,r)$. The following proposition shows the ``linear behavior'' of $F_{A}(x,r)$.

\begin{proposition}\label{prop:Fcc}
The function $F_{A}(x,r)$ behaves in a linear way with respect to convex combinations of $\xr{1}, \xr{2}$.
\end{proposition}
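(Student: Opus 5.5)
The claim to establish is that for every $\nu\in[0,1]$
\[
F_A\bigl(x^\nu,r^\nu\bigr) = \nu\, F_A\bigl(x^1,r^1\bigr) + (1-\nu)\, F_A\bigl(x^2,r^2\bigr),
\]
where $(x^\nu,r^\nu)=\nu(x^1,r^1)+(1-\nu)(x^2,r^2)$. The constant part $\sum \bar l_{ij}x_{ij}$ is linear in $x$, so only the perspective terms $Lx_{ij}^2/r_{ij}$ need attention. We adopt the usual perspective convention: the term is $0$ when $x_{ij}=r_{ij}=0$.

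The plan is to split $F_A = F_{A_1}+F_{A_2}+F_{A_\cap}+F_{A_\emptyset}$ using the partition just introduced, and to check linearity on each block.

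\begin{itemize}
\item On $A_\emptyset$ every term vanishes for both solutions and for all combinations.
\item On $A_\cap$, Lemma~\ref{lem:struct} gives $r^1_{ij}=r^2_{ij}=\bar r_{ij}$, since $\bar r$ depends only on $\lambda$ and $z$, which are common to both solutions. Hence $x^\nu_{ij}=1$ and $r^\nu_{ij}=\bar r_{ij}$, and the term is the constant $L/\bar r_{ij}+\bar l_{ij}$. This trivially equals $\nu(\cdot)+(1-\nu)(\cdot)$ of itself.
\item On $A_1$, Lemma~\ref{lem:struct} gives $x^\nu_{ij}=\nu$ and $r^\nu_{ij}=\nu\bar r_{ij}$. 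So for $\nu>0$,
\[
\frac{L\nu^2}{\nu\bar r_{ij}}+\bar l_{ij}\nu = \nu\Bigl(\frac{L}{\bar r_{ij}}+\bar l_{ij}\Bigr),
\]
which is exactly $\nu F_{\{(i,j)\}}(x^1,r^1)+(1-\nu)\cdot 0$. For $\nu=0$ both sides are $0$ by the convention.
\item On $A_2$ the argument is symmetric, with $1-\nu$ in place of $\nu$.
\end{itemize}

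Summing the four blocks gives the identity. In particular, $s(x^\nu,r^\nu)=\nu s(x^1,r^1)+(1-\nu)s(x^2,r^2)$, because $\sigma/z-\delta$ is constant.

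The only delicate point is the use of Lemma~\ref{lem:struct}. What makes each perspective term linear along the segment, even though the function is not linear in general, is that $r^h$ is proportional to $x^h$ with the \emph{same} factor $\bar r$ for both solutions. The perspective function is positively homogeneous, so it is linear on the ray $r=\bar r\,x$. I will state this explicitly, and also handle the boundary cases $\nu\in\{0,1\}$ via the convention $0^2/0=0$.
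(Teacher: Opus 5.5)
Your proposal is correct and follows essentially the same route as the paper: the same partition into $A_1,A_2,A_\cap,A_\emptyset$, Lemma~\ref{lem:struct} to show each block's contribution is $0$, $\nu F_{A_1}$, $(1-\nu)F_{A_2}$ or constant, and then summation. Your explicit computation $L\nu^2/(\nu\bar r_{ij})=\nu L/\bar r_{ij}$ and the $0^2/0=0$ convention at $\nu\in\{0,1\}$ just spell out steps that the paper leaves implicit.
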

\begin{proof}
Let $F_A\xr{\nu} = \nu F_A \xr{1} + (1-\nu) F_A \xr{2}$. From Lemma \ref{lem:struct} we have that:
\begin{align*}
 &F_{A_{\emptyset}}\xr{\nu} = F_{A_{\emptyset}}\xr{1}= F_{A_{\emptyset}}\xr{2} = 0\\
 &F_{A_{1}}\xr{\nu} = \nu F_{A_1}\xr{1}\\
 &F_{A_{2}}\xr{\nu} = (1-\nu) F_{A_2}\xr{2}\\
 &F_{A_{\cap}}\xr{\nu} = F_{A_{\cap}}\xr{1} = F_{A_{\cap}}\xr{2} = \nu F_{A_{\cap}}\xr{1} + (1-\nu)F_{A_{\cap}}\xr{2}.
\end{align*}
Thus,
\begin{align*}
 F_{A}\xr{\nu}
 & = F_{A_{\emptyset}}\xr{\nu}+F_{A_{1}}\xr{\nu}+F_{A_{2}}\xr{\nu}+ F_{A_{\cap}}\xr{\nu}\\
 & =\nu F_{A_1}\xr{1} + (1-\nu) F_{A_2}\xr{2}+ \nu F_{A_{\cap}}\xr{1} + (1-\nu)F_{A_{\cap}}\xr{2} \\
 & = \nu F_A \xr{1} + (1-\nu) F_A \xr{2}. \\[-1.2cm]
\end{align*}
\end{proof}

\noindent
As an immediate consequence:

\begin{corollary}\label{cor:gcc}
The sub-gradient function $s(x,r)$ behaves in a linear way with respect to convex combinations of $\xr{1}, \xr{2}$: for any $\nu \in [0,1]$
\[
 \nu s\xr{1} + (1 - \nu) s\xr{2}
 = s\big( \nu \xr{1} + (1 - \nu) \xr{2} \big) = s \xr{\nu}
 \;
 \]
\end{corollary}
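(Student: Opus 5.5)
The corollary should follow directly from Proposition~\ref{prop:Fcc}, because the sub-gradient in \eqref{eq:grad} is an affine function of $F_A$. More precisely, for any $(x,r)$,
\[
 s(x,r) = \frac{\sigma}{z} - \delta + F_A(x,r),
\]
where the term $\frac{\sigma}{z} - \delta$ does not depend on $(x,r)$: $z$ is fixed throughout this section, and $\sigma$ and $\delta$ are data. So the plan is to write $s$ in this form, apply the linearity of $F_A$, and then reassemble the constant.

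\textbf{Step 1.} Fix $\nu \in [0,1]$ and expand the left-hand side:
\[
 \nu s\xr{1} + (1-\nu) s\xr{2} = \bigl(\nu + (1-\nu)\bigr)\Bigl(\frac{\sigma}{z} - \delta\Bigr) + \nu F_A\xr{1} + (1-\nu) F_A\xr{2}.
\]
The coefficient of the constant term is $\nu + (1-\nu) = 1$, so this equals $\frac{\sigma}{z} - \delta + \nu F_A\xr{1} + (1-\nu) F_A\xr{2}$.

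\textbf{Step 2.} By Proposition~\ref{prop:Fcc}, $\nu F_A\xr{1} + (1-\nu) F_A\xr{2} = F_A\xr{\nu}$. Substituting this into Step 1 gives $\frac{\sigma}{z} - \delta + F_A\xr{\nu} = s\xr{\nu}$, which is the claim.

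The only point needing care is that $s\xr{\nu}$ is well defined, since for $\nu \in (0,1)$ the point $\xr{\nu}$ is fractional. Lemma~\ref{lem:struct} handles this. On arcs with $x^\nu_{ij} > 0$, we have $r^\nu_{ij} = x^\nu_{ij}\,\bar r_{ij} > 0$, so the term $L x^2/r$ is finite. On arcs in $A_\emptyset$, both $x^\nu_{ij}$ and $r^\nu_{ij}$ are zero, and the term is read as $0$, consistent with the perspective convention used in the proof of Proposition~\ref{prop:Fcc}. Since that proposition already covers this case, I expect no real obstacle; the corollary is the affine extension of the proposition.
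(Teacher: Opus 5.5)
Your proposal is correct and matches the paper's proof: the paper likewise writes $s(x,r) = \frac{\sigma}{z} - \delta + F_A(x,r)$ via \eqref{eq:grad} and invokes Proposition~\ref{prop:Fcc}, with the constant term preserved because the weights $\nu$ and $1-\nu$ sum to one. Your well-definedness remark (using $r^\nu = x^\nu \bar r$ from Lemma~\ref{lem:struct}) is left implicit in the paper; to make it airtight, split on $x^\nu_{ij} > 0$ versus $x^\nu_{ij} = 0$ rather than on membership in $A_\emptyset$, so that arcs of $A_1$ at $\nu = 0$ and arcs of $A_2$ at $\nu = 1$ are also covered by the convention that the term $L x^2/r$ equals $0$ when $x = r = 0$.
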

\begin{proof}
The result follows directly from Proposition \ref{prop:Fcc} and observing that, according to \eqref{eq:grad}, $s(x,r) = \frac{\sigma}{z} -\delta + F_{A}(x,r)$.
\end{proof}

All this finally leads to the announced result:

\begin{proposition}\label{prop:solPt}
The solution $(\tilde x, \tilde r)$ defined in \eqref{eq:xtilde} is optimal for $\tilde P(z)$.
\end{proposition}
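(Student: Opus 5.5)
We argue by weak duality. The convexified relaxation $\tilde P(z)$ is the problem whose optimal value equals $\underline{v}_1(z)=\psi_1(\lambda^*;z)$: minimize $\sum f_{ij}r_{ij}$ over the convex hull of the set $X(z)$ of points satisfying \eqref{eq:cflow}, \eqref{eq:cvarx}, \eqref{eq:crmin}, subject to the relaxed delay constraint $\frac{\sigma}{z}+F_A(x,r)\le\delta$. The plan is to show three things. First, $(\tilde x,\tilde r)$ is feasible for $\tilde P(z)$. Second, its objective value equals $\psi_1(\lambda^*;z)$. Third, $\psi_1(\lambda^*;z)$ is a lower bound on the value of $\tilde P(z)$. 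Optimality then follows at once.

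\emph{Feasibility.} Membership in the convex hull is immediate, since $(\tilde x,\tilde r)$ is a convex combination of $\xr{1},\xr{2}\in X(z)$. For the delay constraint we invoke Corollary~\ref{cor:gcc}:
\[
s(\tilde x,\tilde r)=\nu^* s\xr{1}+(1-\nu^*)s\xr{2}=0 ,
\]
by the choice of $\nu^*$. Hence $\frac{\sigma}{z}+F_A(\tilde x,\tilde r)=\delta$, so the delay constraint holds, and in fact holds with equality.

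\emph{Objective value.} Write $\phi(x,r)=\sum f_{ij}r_{ij}$, which is linear. Since both $\xr{1}$ and $\xr{2}$ are optimal for the Lagrangian problem at $\lambda^*$, we have $\psi_1(\lambda^*;z)=\phi\xr{h}+\lambda^* s\xr{h}$ for $h=1,2$. Taking the $\nu^*$-combination of these two identities and using linearity of $\phi$ together with Corollary~\ref{cor:gcc} gives
\[
\psi_1(\lambda^*;z)=\phi(\tilde x,\tilde r)+\lambda^* s(\tilde x,\tilde r)=\phi(\tilde x,\tilde r).
\]
This is where the linear behaviour of $F_A$ from Proposition~\ref{prop:Fcc} is essential. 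For a general nonlinear relaxed constraint, $s$ evaluated at the combination would differ from the combination of the values of $s$, and this identity would fail.

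\emph{Lower bound.} Take any feasible $(x,r)$ of $\tilde P(z)$ and write it as $\sum_k\mu_k(x^k,r^k)$ with $(x^k,r^k)\in X(z)$. For each $k$, $\phi(x^k,r^k)+\lambda^*s(x^k,r^k)\ge\psi_1(\lambda^*;z)$ by definition of $\psi_1$. Averaging these inequalities with weights $\mu_k$ bounds $\phi(x,r)$ from below by $\psi_1(\lambda^*;z)-\lambda^*\sum_k\mu_k s(x^k,r^k)$.

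\textbf{Main obstacle.} To conclude we need $\sum_k\mu_k s(x^k,r^k)\le0$, but feasibility of $(x,r)$ only gives $s(x,r)\le0$. We handle this through convexity of $s$ in $(x,r)$: the term $Lx^2/r$ is a perspective and hence jointly convex, which yields $s(x,r)\le\sum_k\mu_k s(x^k,r^k)$. That inequality points the wrong way for our purpose. We therefore define $\tilde P(z)$ as the standard convexified relaxation of~\cite{LR01}, in which the relaxed constraint is imposed on the convex combination in the averaged form $\sum_k\mu_k s(x^k,r^k)\le0$. With this definition, weak duality gives $\phi\ge\psi_1(\lambda^*;z)$ directly. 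The point $(\tilde x,\tilde r)$ satisfies this averaged form with equality, again by Corollary~\ref{cor:gcc}, so it is feasible and attains the bound.

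The delicate step is therefore fixing the right definition of $\tilde P(z)$ so that weak duality goes through. Once that definition is in place, Corollary~\ref{cor:gcc} shows that the two possible formulations of the relaxed constraint agree at $(\tilde x,\tilde r)$.
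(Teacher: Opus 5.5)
Your argument is correct and has the same skeleton as the paper's. The paper's three-line proof is the Lagrangian sufficiency argument: the $\nu^*$-combination attains $\psi_1(\lambda^*;z)$ and satisfies $s(\tilde x,\tilde r)=0$. You unpack this into feasibility, value equal to $\psi_1(\lambda^*;z)$, and weak duality.

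The real difference is how you secure weak duality. The paper tacitly treats $\tilde P(z)$ as a problem over points. It therefore needs $\psi_1(\lambda^*;z)$ to remain the minimum of the Lagrangian over $\mathrm{conv}\,X(z)$. You instead switch to the image-space (averaged) convexification of \cite{LR01}, where weak duality is immediate. Given how loosely the paper defines $\tilde P(z)$, this is legitimate. It is also consistent with the paper's later use: joint convexity of $Lx^2/r$ gives $s(x,r)\le\sum_k\mu_k s(x^k,r^k)\le 0$, so the averaged feasible set still lies inside that of $\bar P(z)$.

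The switch was nevertheless unnecessary. Every $(x,r)\in\mathrm{conv}\,X(z)$ admits a decomposition $x=\sum_k\mu_k x^k$ with $r^k_{ij}=(r_{ij}/x_{ij})\,x^k_{ij}$ where $x_{ij}>0$, and $r^k_{ij}=0$ otherwise. In other words, each arc carries a common ratio $r_{ij}/x_{ij}\in[z,c_{ij}]$ across the decomposition. Along it, $F_A$ is affine exactly as in Proposition~\ref{prop:Fcc}, so $\sum_k\mu_k s(x^k,r^k)=s(x,r)\le 0$. Your averaging argument then closes with your original pointwise definition. This is exactly the fact the paper's one-liner leaves implicit.

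Finally, you misplace where Corollary~\ref{cor:gcc} is essential. The identity $\psi_1(\lambda^*;z)=\phi(\tilde x,\tilde r)$ follows from the choice of $\nu^*$ and linearity of your $\phi$ alone. Likewise, $s(\tilde x,\tilde r)\le 0$ already follows from convexity. What the corollary really supplies is the equality $s(\tilde x,\tilde r)=0$, i.e.\ the complementary slackness \eqref{eq:cs1} used in the subsequent KKT construction.
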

\begin{proof}
The solution $(\tilde x, \tilde r)$ is optimal for $\psi_1(\lambda^*; z)$ being a convex combination of two optimal solutions $\xr{1}$ and $\xr{2}$. By Corollary \ref{cor:gcc}, $(\tilde x, \tilde r)$ satisfies the complementary slackness condition $s(\tilde x, \tilde r)= 0$. Hence, it is feasible and optimal for the (primal) convex relaxation $\tilde P(z)$.
\end{proof}

\noindent
With all this machinery, we are now able to characterise the (rest of the) optimal solution to \eqref{eq:LagDfix-minlp-alt} (besides of course $\lambda^*$). We denote by $\bar P(z)$ the \emph{continuous relaxation} of the modified MINLP (when $z$ is fixed), i.e., \eqref{eq:of}--\eqref{eq:cdelay}, \eqref{eq:crmin} and
\begin{equation}
 0 \leq x_{ij} \leq 1 \qquad (i,j)\in A \label{eq:c7} \tag{c7}
\end{equation}
in lieu of \eqref{eq:cvarx}. We will now show that the optimal node prices $[d^*_i]_{i \in N}$  of the SPP($\lambda^*$) problem \eqref{eq:SP} solved for $\lambda = \lambda^*$ are all that we need to compute the sought-after complete dual solution. As a by-product, this shows that $\bar P(z)$ is actually equivalent to the convexified relaxation $\tilde P(z)$ (a result that would have been obvious for a problem with linear constraints, but it is not here).

\smallskip
\noindent
Clearly, the optimal (primal) solution $(\tilde x, \tilde r)$ to $\tilde P(z)$ is feasible for $\bar P(z)$, by inclusion of the corresponding feasible regions. We now show that $(\tilde x, \tilde r)$ is also optimal for $\bar P(z)$ by constructing a dual solution that, together with $(\tilde x, \tilde r)$, satisfies the Karush–Kuhn–Tucker (KKT) conditions. We will use the following notation to denote the {\em dual variables}:
\begin{itemize}
 \item $[\pi_i]_{i \in N}$ are associated to constraints \eqref{eq:cflow};
 \item $\lambda$ is  associated to constraint \eqref{eq:cdelay};
 \item $\eta^+_{ij}$ and $\eta^-_{ij}$, $(i,j) \in A$, are  associated to $\leq$ and $\geq$ constraints in \eqref{eq:c7}, respectively;
 \item $\mu^+_{ij}$ and $\mu^-_{ij}$, $(i,j) \in A$, are  associated to $\leq$ and $\geq$ constraints in \eqref{eq:crmin}, respectively.
\end{itemize}
The KKT conditions require {\em stationarity}
\begin{align}
 & \textstyle
   2\lambda L{\frac{x_{ij}} {r_{ij}}} + \lambda\bar{l}_{ij} -
   \pi_i+\pi_j-c_{ij}\mu^+_{ij}+z\mu^-_{ij}-\eta^-_{ij}+\eta^+_{ij} = 0
 & (i,j)\in A \label{eq:st1} \tag{st1}\\
 & \textstyle
   f_{ij} - \lambda L\left(\frac{x_{ij}}{r_{ij}}\right)^2
   +\mu^+_{ij}-\mu^-_{ij} = 0
 & (i,j)\in A \label{eq:st2} \tag{st2}
\end{align}
{\em primal and dual feasibility}
\begin{align}
 & \mu_{ij}^+\ge 0   &(i,j)\in A \label{eq:df1} \tag{df1} \\
 & \mu_{ij}^-\ge0    &(i,j)\in A \label{eq:df2} \tag{df2} \\
 & \lambda \ge 0     &(i,j)\in A \label{eq:df3} \tag{df3} \\
 & \eta_{ij}^+\ge 0  &(i,j)\in A \label{eq:df4} \tag{df4} \\
 & \eta_{ij}^-\ge0   &(i,j)\in A \label{eq:df5} \tag{df5}
\end{align}
and {\em complementary slackness}:
\begin{align}
 & \textstyle
   \lambda \big({\frac{\sigma}{ z}} - \delta +
   \sum_{(i,j)\in A}  \frac{Lx_{ij}^2}{r_{ij}}-\bar{l}_{ij} x_{ij}\big) = 0
 &  \label{eq:cs1} \tag{cs1}\\
 & \mu^+_{ij}(r_{ij}-c_{ij}x_{ij})=0  &(i,j)\in A \label{eq:cs2} \tag{cs2}\\
 & \mu^-_{ij}(x_{ij}z-r_{ij})=0    &(i,j)\in A \label{eq:cs3} \tag{cs3}\\
 & -\eta^-_{ij}x_{ij}=0   &(i,j)\in A \label{eq:cs4} \tag{cs4}\\
 & \eta^+_{ij}(x_{ij}-1)=0  &(i,j)\in A \label{eq:cs5} \tag{cs5}
\end{align}
Note that the solution $(\tilde x, \tilde r)$ satisfies primal feasibility with respect to  $\bar P(z)$ and the complementary slackness condition \eqref{eq:cs1}. In order to construct a dual solution $({\tilde \pi}, {\tilde \lambda}, {\tilde \eta}^+, {\tilde \eta}^-, {\tilde \mu}^+, {\tilde \mu}^-)$ that meets all the KKT conditions with $(\tilde x, \tilde r)$ we consider the possible values of ${\tilde x}_{ij}$ for each arc $(i,j) \in A$.

\subsubsection{Case ${\tilde x}_{ij} \neq 0$}\label{subseq:casexstar1}

If ${\tilde x}_{ij} = 1$, then $x^1_{ij} = x^2_{ij} = 1$, and ${\tilde r}_{ij} = r^1_{ij} = r^2_{ij} = {\bar r}_{ij}$. We set  ${\tilde \eta}^+_{ij} = {\tilde \eta}^-_{ij}= 0$, ${\tilde \pi} = - d^*$ and ${\tilde \lambda} = \lambda^*$. The value of the remaining dual variables depends on ${\bar r}_{ij}$ as follows:
\begin{equation}
 \label{eq:kktsol}
 ({\tilde \mu}^+_{ij},{\tilde \mu}^-_{ij}) =
 \begin{cases}
  (0,f_{ij}-\frac{\lambda^* L}{z^2}) & \text{if} \ {\bar r}_{ij} = z,\\
  (0,0) & \text{if} \ {\bar r}_{ij} = {\bar v},\\
  (-f_{ij}+\frac{\lambda^* L}{c_{ij}^2},0) & \text{if} \ {\bar r}_{ij}= c_{ij} .
 \end{cases}
\end{equation}
By setting ${\tilde \pi} = - d^*$, we have $\tilde{c_{ij}}-\pi_i+\pi_j=0$, since the vector $d^*$ meets the Bellman conditions for the SPP \eqref{eq:SP} with non-negative arc costs $\tilde{c}_{ij}$. Note also that  ${\bar v}$ is a stationary point for $h(r_{ij})$ (defined in \eqref{eq:rijproject}), and the assignment \eqref{eq:kktsol} guarantees dual feasibility with respect to \eqref{eq:df1}--\eqref{eq:df2} according to \eqref {SPopt}. With this in mind, one can check that all the KKT conditions are satisfied. Indeed, if $0< {\tilde x}_{ij}< 1$, then exactly one between $x^1_{ij}$ and $x^2_{ij}$ is equal to 1. We assume, without loss of generality, that $x^1_{ij} = 1$. This implies ${\tilde x}_{ij} = \nu^* x^1_{ij}$ and ${\tilde r}_{ij} = \nu^* r^1_{ij}$. Anyway, the coefficient $\nu^*$ cancels out, so the dual values defined for the case ${\tilde x}_{ij} = 1$ still satisfy the KKT conditions since we apply the same reasoning to $x^1_{ij} = 1$.

\subsubsection{Case $\tilde x_{ij} = 0$}

If ${\tilde x}_{ij} = 0$, then $x^1_{ij} = x^2_{ij} = 0$ and ${\tilde r}_{ij} = 0$. Hence,  each term $h(x,r) = x^2 / r$ in \eqref{eq:cdelay} (omitting the subscripts $(i,j)$ to ease notation) is non-differentiable. To get around this, it is enough to find a subgradient in $\partial h(0,0)$  meeting the KKT conditions.
In particular, we define $f(r) = 1 / r$ and observe that $h(x,r)$ is the {\em perspective function} of $f(r)$ for $xz\le r \le x c_{ij}$ (see constraints \eqref{eq:crmin}) and $x \in [0,1]$. It is well-known that the perspective function can be extended by continuity in $(0,0)$, which gives $h(0,0)= 0$. Using \cite[(5)]{FG2006}, we have
\begin{equation}
 \textstyle
 \big[\nabla f ( \frac{{x}}{{r}}) \,,\,
      f( \frac{{x}}{{r}} ) -
      \nabla f( \frac{{x}}{{r}} )\frac{{x}}{{r}} \big] =
 \big[ - \left(\frac{{x}}{{r}}\right)^2 \,,\, 2\frac{{x}}{{r}} \big]
 \in \partial {h}(0,0) \;, \label{eq:FGsub}
\end{equation}
and substituting \eqref{eq:FGsub} to construct the stationary KKT conditions, we obtain the same equations \eqref{eq:st1}--\eqref{eq:st2}. This shows that using the Perspective Reformulation is crucial to obtain a tight formulation---and therefore the necessary optimal dual solution; this is intuitive, and has been already shown elsewhere \cite{BFGTG24}. The rest of the KKT conditions are unchanged.

\smallskip
\noindent
We are now ready to construct the dual solution when $\tilde{x}_{ij}=\tilde{r}_{ij} = 0$. To be consistent with the dual solution constructed in  \S \ref{subseq:casexstar1}, we set $\tilde \pi = -d^*$ and $\tilde \lambda = \lambda^*$. To meet \eqref{eq:cs5} we set $\tilde  \eta^+_{ij} = 0$, so the resulting system \eqref{eq:st1}--\eqref{eq:st2} becomes:
\begin{equation*}
\begin{cases}
 \lambda^*\bar{l}_{ij} + d^*_i - d^*_j +
 z\mu^-_{ij} - c_{ij}\mu^+_{ij} - \eta^-_{ij} = 0, \\
 f_{ij} + \mu^+_{ij} - \mu^-_{ij} = 0.
\end{cases}
\end{equation*}
from which we get $\mu^-_{ij}=f_{ij}+\mu^+_{ij}$ and
\[
 \lambda^*\bar{l}_{ij} + zf_{ij}+ d^*_i -d^*_j
 +(z-c_{ij})\mu^+_{ij}-\eta^-_{ij}=0 \;.
\]
Denoting by $q_{ij} = \lambda^*\bar{l}_{ij} + zf_{ij}+ d^*_i -d^*_j$, we can set
\begin{equation}
 (\mu^+_{ij},\eta^-_{ij}) =
 \begin{cases}
  (0,q_{ij}) & \text{if} \ q> 0\\
  (-\frac{q}{z-c_{ij}},0) & \text{if} \ q< 0\\
  (0,0) & \text{if} \ q=0
 \end{cases}
\end{equation}
meeting the KKT conditions. Note that this construction requires that the node prices $d^*$ be defined. However, if a node $i$ is unreachable from the source $s$, the corresponding price $d^*_i$ is undefined (i.e., $d^*_i = +\infty$).
Yet, one can easily show that it is still possible to construct a dual solution satisfying the KKT conditions, by similar reasoning. We omit the details for brevity.

\subsubsection{Integrality property}

We constructed a pair $(\tilde x, \tilde r)$ and $({\tilde \pi}, {\tilde \lambda}, {\tilde \eta}^+, {\tilde \eta}^-, {\tilde \mu}^+, {\tilde \mu}^-)$ of primal and dual solutions for $\bar P(z)$, respectively, that satisfy the KKT conditions.
Since $\bar P(z)$ is convex, this proves that such solutions are optimal and {\em strong duality} holds for $\bar P(z)$. Note that it also shows that the relaxed problems $\bar P(z)$ and $\tilde P(z)$ are equivalent. This fact can be interpreted as a sort of ``integrality property'' (originally defined for linear problems, see \cite{F2005}) of the (nonlinear) Lagrangian $\psi_{1}(\lambda; z)$, meaning that relaxing integrality does not affect the corresponding value function  $\underline{v}_{1}(z)$. This in turn implies that the integrality property also applies to the Lagrangian $\psi(\pi, \lambda; z)$, for which the multipliers $({\tilde \pi}, {\tilde \lambda})$ are optimal. Therefore, by (efficiently) solving the (univariate) Lagrangian dual \eqref{eq:LagDfix-minlp-alt-bis} we obtain (for free) the optimal dual multipliers ${\tilde \pi} = - d^*$, ${\tilde \lambda} = \lambda^*$ of the Lagragian dual \eqref{eq:LagDfix-minlp-alt}, as desired, and the corresponding value functions $\underline{v}_1(z)$ and  $\underline{v}(z)$ are equivalent. Remarkably, this hinges on two non-obvious modelling choices: the use of the Perspective Reformulation, and the nonconvex constraints \eqref{subeqn-1:crmin-split}. Without both, the formulation would be weaker, and therefore we would not have a primal/dual optimal pair with which to construct an approximation of $\underline{v}_1(z)$: this is done right next.

\smallskip
\noindent
It has to be remarked that the integrality property obviously does not imply that we are solving the problem with fixed $z$ to optimality (DCR generalises the constrained shortest path problem and it is therefore in general $\mathcal{NP}$-hard \cite{FGS2015a}). Indeed, the optimal primal (convexified) solution of the Lagrangian dual is the convex combination of two paths, and therefore in general fractional. Hence, $\underline{v}_1(z)$ is in general only a lower bound on $v(z)$, the interesting question computationally being how strong it is.

\section{Approximating the value function}\label{sec:approx}

Once $v(z)$ has been computed with the approach of the previous section, we can construct the announced (lower) piecewise linear approximation of $\underline{v}(\cdot)$ (equivalently, $\underline{v}_1(\cdot)$) in $z$. Using \eqref{eq:LagDfix-minlp-alt}, the Benders reformulation is
\begin{align}
	\min \, & z_0 & \label{eq:ofB} \\
	& z_0 \geq \psi(\lambda, \pi; z)
	& \lambda \geq 0 \;,\; \pi \in \RR^{|N|}  \label{eq:GBcuts} \\
	& z \in [\, \rho \,,\, c_{max} \,] & \label{eq:cvarrz}
\end{align}
As customary, the infinitely many \eqref{eq:GBcuts} are sampled by fixing some $z = {\bar z}$ and using the optimal values $\lambda^*({\bar z})$ and $\pi^*({\bar z})$ computed as above, since they maximize the value. This produces a pointwise lower-approximations to \eqref{eq:GBcuts}; however, in order to efficiently solve the minimization problem, one has to develop \emph{Benders' cuts}, i.e., (linear) lower approximations of $\underline{v}(z) = \psi(\lambda^*(z),\pi^*(z); z)$ that drive the search for the optimal $z^*$. Unlike in the standard case, $\underline{v}(z)$---besides being in general nondifferentiable due to its max-nature---is in general a nonconvex function of $z$, which means that a single linear approximation may not be valid for all the domain $[\, \rho \,,\, c_{max} \,]$. The crux is therefore to carefully analyse $\underline{v}(\cdot)$ in order to derive multiple valid approximations at some point $z$ out of the available solution information.

\smallskip
\noindent
As customary for a max-function, the first-order information in $z$ is obtained by considering $\lambda$ and $\pi$ fixed to the values $\lambda^*(z)$ and $\pi^*(z)$ obtained for the given $z$ and analysing how $\psi(\lambda,\pi; z)$ behaves as a function of $z$. In this sense we distinguish the \emph{constant} term $-\lambda \delta - \sum_{i \in N} \pi_i b_i$ and the \emph{convex} term $\lambda \sigma / z$, for which computing linear lower approximations is trivial, from the \emph{nonconvex} term corresponding to the optimization problem
\begin{equation}
 \textstyle
 \psi(z) =
 \min \Big\{ \sum_{(i,j)\in A} f_{ij}r_{ij} +
             \frac{\lambda L{x_{ij}}^2}{r_{ij}} +
             (\lambda \bar{l}_{ij} + \pi_j - \pi_i) x_{ij} \,:\,
             \eqref{eq:cvarx}, \eqref{eq:crmin} \Big\}
 \label{eq:gopt}
\end{equation}
The first observation is that \eqref{eq:gopt} is separable with respect to the arcs $(i,j) \in A$, i.e., $\psi(z) = \sum_{(i,j)\in A} \phi_{ij}(z)$. Clearly, we can then focus on each single term individually: denoting by ${\bar d}_{ij} = \pi_j - \pi_i$, and dropping the subscript ${(i,j)}$ to ease notation, we get the (crucial) nonconvex function
\begin{equation}
 \textstyle
 \phi(z) =
 \min \big\{ f r + \frac{\lambda Lx^2}{r} + (\lambda \bar{l} + {\bar d}) x
             \,:\, z\, x \le r \le c \,x \;,\; x \in \{0,1\} \big\}
 \label{eq:fi}
\end{equation}

\subsection{Analysis of $\phi(z)$}

As usual, in the following, we denote by $\bar z$ the value of $z$ for which $\lambda$ and $\pi$ were computing according to \eqref{eq:LagDfix-minlp-alt}.
Since the variable $x$ in \eqref{eq:fi} is binary, we need only consider two cases in our analysis of $\phi(z)$. The simplest is $x = 0$ ($\implies r=0$), where $\phi(z) = 0$. In particular, this surely happens if $z > c$, as $x = 0$ is the only feasible choice. Note that, hence, $\phi(z) \leq 0$ for all $z$ since $\phi(\cdot)$ is defined by a minimum. The interesting case is then $x = 1$, where $\phi(z) = h(r(z)) + {\bar d}$ with $h(r) = f r + \lambda (L / r + \bar{l})$ and
\[
 r(z) = \min\{ \, h(r) \,:\, z \le r \le c \, \}
      =  \min\{ \, c \,,\, \max\{ \, \bar{v} \,,\, z \, \} \, \}
\]
according to \eqref{eq:rijproject} and \eqref{SPopt}. Indeed, the constant $\bar d$ does not change the point ${r(z)}$ where $h(r)$ attains the minimum. We start proving the following property:

\begin{proposition}\label{prop:zbar}
 $\phi(z) = 0$ for all $z \geq {\bar z}$.
\end{proposition}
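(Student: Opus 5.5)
The plan is to prove the two inequalities $\phi(z) \leq 0$ and $\phi(z) \geq 0$ for $z \geq {\bar z}$. The first is already available: $x = 0$, $r = 0$ is always feasible in \eqref{eq:fi} with value $0$, so $\phi(z) \leq 0$ for every $z$. It therefore remains to show that the branch $x = 1$ never gives a negative value when $z \geq {\bar z}$. If $z > c$ this branch is infeasible and $\phi(z) = 0$ trivially, so I will assume $z \leq c$, which also gives ${\bar z} \leq c$.

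\emph{Step 1: the claim holds at ${\bar z}$.} By the construction of \S\ref{ssec:kkt}, $\lambda = \lambda^*({\bar z})$ and $\pi = -d^*$, where $d^*$ are the optimal node prices of the shortest path problem \eqref{eq:SP} with arc costs $\tilde c_{ij} = h({\bar r}_{ij})$, computed at $z = {\bar z}$. The Bellman conditions give $d^*_j \leq d^*_i + \tilde c_{ij}$ for every arc. With ${\bar d} = \pi_j - \pi_i = d^*_i - d^*_j$ this becomes
\[
 h\bigl(r({\bar z})\bigr) + {\bar d} \;=\; \tilde c_{ij} + d^*_i - d^*_j \;\geq\; 0 .
\]
Here I use that $r({\bar z}) = {\bar r}_{ij}$ by \eqref{SPopt}, and that $h$ as defined in the analysis of $\phi$ already contains the term $\lambda \bar l$. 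Hence the $x = 1$ branch has nonnegative value at ${\bar z}$, and so $\phi({\bar z}) = 0$.

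\emph{Step 2: monotonicity in $z$.} For $x = 1$ the value of \eqref{eq:fi} is $\min\{ h(r) : z \leq r \leq c \} + {\bar d}$. For $z \geq {\bar z}$ the interval $[z, c]$ is contained in $[{\bar z}, c]$. A minimum over a smaller set cannot be lower, so $h(r(z)) \geq h(r({\bar z}))$. Combining this with Step 1 gives $h(r(z)) + {\bar d} \geq 0$. Therefore $\phi(z) = \min\{0,\, h(r(z)) + {\bar d}\} = 0$.

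The main obstacle is Step 1, specifically its reliance on $\pi$ being exactly the negated shortest path labels computed at ${\bar z}$, with the correct sign convention, and on $\lambda$ being the same in $h$ and in $\tilde c$. Some care is also needed for nodes unreachable from $s$, where $d^*_i = +\infty$. For these I would take the (finite) modified prices mentioned at the end of the case $\tilde x_{ij} = 0$ in \S\ref{ssec:kkt}, and check that they can still be chosen so that the reduced costs $\tilde c_{ij} + \pi_j - \pi_i$ are nonnegative. For example, setting all unreachable labels to a common sufficiently large value keeps every Bellman inequality valid.
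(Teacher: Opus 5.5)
Your proof is correct and follows the paper's argument: the Bellman inequality $\tilde c_{ij} \geq d^*_j - d^*_i$ at $\bar z$ gives $h(r(\bar z)) + \bar d \geq 0$. The paper then relies, without stating it, on the same monotonicity $h(r(z)) \geq h(r(\bar z))$ for $z \geq \bar z$ that you make explicit in Step 2, and your handling of the cases $z > c$ and unreachable nodes fills in details the paper leaves implicit.
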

\begin{proof}
The Bellman conditions guarantee that any (dual) optimal solution $d^*$ to the Shortest Path problem  \eqref{eq:SP} (for the given ${\bar z}$) satisfies ${\tilde c}_{ij} \geq d^*_j - d^*_i$ for each arc $(i,j) \in A$, and hence ${\tilde c} \geq -{\bar d}$. By definition, ${\tilde c} = h(r({\bar z}))$, whence
\[
 h(r(z)) + {\bar d} \geq h(r(z)) - h(r({\bar z}))
 \implies
 0 \geq \phi(r(z)) = h(r(z)) + {\bar d} \geq 0 \;.
 \vspace{-0.8cm}
\]
\end{proof}

\noindent
All in all, $x=1$ can only happen for the values of $z$ for which $h(r(z)) +  {\bar d} < 0$. Note that the objective in the minimization of \eqref{eq:fi} is increasing in $\lambda \geq 0$ (both $Lx^2 / r \geq 0$ and $\bar{l} \geq 0$), so it is convenient to examine separately the edge case $\lambda = 0$ from $\lambda > 0$.

\subsubsection{The case $\lambda = 0$}\label{sssec:philam0}

The objective in the minimization of \eqref{eq:fi} is the line $f r + {\bar d}$, with positive slope ($f \geq 0$), with a zero at $p=-{\bar d}/f$. Clearly, if $p \leq 0$, the function $\phi(z)$ is identically zero for all values of $z$: the objective is non-negative on all feasible $0 \leq \rho \leq r$, and therefore $x = 0$ is always chosen. Therefore we only consider the $p > 0$ case, which also implies $\bar{d} < 0$. Then, $\phi(z)$ is  described by
\begin{equation}
 \phi(z) =
 \begin{cases}
  fz + {\bar d} &\text{if} \ \ z\le\min\{p,c\}\\
  0             &\text{otherwise}
 \end{cases}
 \; .
 \label{eq:phi2lin}
\end{equation}
The definition in \eqref{eq:phi2lin} preserves continuity if $\min\{p,c\}=p$, as shown in Figure~\ref{fig:phi-c-min}(left); otherwise, it produces a jump discontinuity at $z=c$, from the negative value $fc+\bar d$ to $0$, as shown in Figure~\ref{fig:phi-c-min}(right).

\begin{figure}[htbp]
\resizebox{\textwidth}{4cm}{
\begin{tikzpicture}
\begin{axis}[
    axis lines=middle,
    axis line style={thin},
    xlabel={$z$},
    ylabel={$\phi(z)$},
    xmin=-1, xmax=6,
    ymin=-3, ymax=1,
    xtick=\empty,
    ytick=\empty,
    clip=false
]

\pgfmathsetmacro{\P}{3}       
\pgfmathsetmacro{\Rho}{0.4}   
\pgfmathsetmacro{\C}{4}       
\pgfmathsetmacro{\F}{1}       
\pgfmathsetmacro{\D}{-3}      

\pgfmathsetmacro{\Yr}{\F*\Rho+\D}
\pgfmathsetmacro{\Yp}{\F*\P+\D}

\addplot[
    thin,
    domain=0:\Rho,
    samples=100
]
{\F*x + \D};

\addplot[
    very thick,
    domain=\Rho:\P,
    samples=100
]
{\F*x + \D};


\draw[
    line width=1.6pt
]
(axis cs:\P,0) -- (axis cs:6,0);

\draw[thick]
    (axis cs:\Rho,-0.08) --
    (axis cs:\Rho,0.08);
coordinates {(\Rho,0)};

\addplot[
    only marks,
    mark=*,
    thick
]
coordinates {(\Rho,\Yr)};

\node[below] at (axis cs:\Rho+0.2,0) {$\rho$};

\addplot[
    only marks,
    mark=*,
    thick
]
coordinates {(\P,0)};

\node[below] at (axis cs:\P,0) {$p$};

\draw[thick]
    (axis cs:\C,-0.08) --
    (axis cs:\C,0.08);
coordinates {(\C,0)};

\node[below] at (axis cs:\C,0) {$c$};

\node[left] at (axis cs:0,\D) {$\bar d$};

\end{axis}
\end{tikzpicture}
\;\;
\begin{tikzpicture}
\begin{axis}[
    axis lines=middle,
    xlabel={$z$},
    ylabel={$\phi(z)$},
    xmin=-1, xmax=6,
    ymin=-3, ymax=1,
    xtick=\empty,
    ytick=\empty,
    clip=false
]

\pgfmathsetmacro{\F}{1}
\pgfmathsetmacro{\D}{-3}
\pgfmathsetmacro{\P}{-\D/\F}   
\pgfmathsetmacro{\C}{2}        
\pgfmathsetmacro{\Rho}{0.4}    

\pgfmathsetmacro{\YC}{\F*\C+\D}
\pgfmathsetmacro{\Yr}{\F*\Rho+\D}

\addplot[
    thin,
    domain=0:\Rho,
    samples=100
]
{\F*x + \D};

\addplot[
    very thick,
    domain=\Rho:\C,
    samples=100
]
{\F*x + \D};

\addplot[
    very thick,
    domain=\C:6,
    samples=2
]
{0};


\draw[thick]
    (axis cs:\Rho,-0.08) --
    (axis cs:\Rho,0.08);
coordinates {(\Rho,0)};

\addplot[
    only marks,
    mark=*,
    thick
]
coordinates {(\Rho,\Yr)};

\node[below] at (axis cs:\Rho+0.2,0) {$\rho$};

\addplot[
    only marks,
    mark=*,
    thick
]
coordinates {(\C,\YC)};

\addplot[
    only marks,
    mark=o,
    thick
]
coordinates {(\C,0)};

\draw[thick]
    (axis cs:\P,-0.08) --
    (axis cs:\P,0.08);
coordinates {(\P,0)};

\node[below left] at (axis cs:\C,0) {$c$};

\node[below right] at (axis cs:\P,0)
{$p$};

\node[left] at (axis cs:0,\D) {$\bar d$};

\node[
    left,
    xshift=-4pt
] at (axis cs:\C,\YC) {};



\end{axis}
\end{tikzpicture}
} 
\caption{The two cases of \eqref{eq:phi2lin} for $\lambda = 0$: left $p \leq c$, right $p > c$}
\label{fig:phi-c-min}
\end{figure}

\noindent
In both cases, we obtain a two-piece linear function that is not differentiable at $\min\{p,c\}$, and is globally concave if $p \leq c$. Note that, $\bar{z} \geq \min\{p,c\}$, otherwise the definition \eqref{eq:phi2lin} would contradict Proposition \ref{prop:zbar}.


\subsubsection{The case $\lambda > 0$}\label{sssec:philamg0}

Analogously to the previous case, we want to understand when $h(r) + {\bar d} < 0$, so that $\phi(z) < 0$. Since $r\geq \rho > 0$, $h(r)$ is strictly convex, differentiable, and diverges to $+\infty$ as $r \to 0^+$ and $r \to +\infty$, attaining a minimum at $\bar{v}$. Clearly, if $h(\bar{v}) \geq 0$ then $\phi(z) = 0$ for all values of $z$, as before: therefore, we only examine the case $h(\bar{v}) < 0$, which also implies that the function admits two roots $0 \leq \zeta_- \leq \zeta_+$. Hence, there are three cases:
\begin{description}
 \item[(i)] if  $\bar{v}\le c$, then\\[-0.7cm]
            \begin{equation}
		    \phi(z)=
		    \begin{cases}
			h({\bar v}) + {\bar d} \ \ &\text{if} \ \ z\le\bar{v}\\
			h(z) + {\bar d} \ \ &\text{if} \ \ \bar{v}\le z\le\min\{\zeta_+,c\}\\
			0 \ \ & \text{otherwise}
		    \end{cases}
		    \label{eq:LG0case1}
	        \end{equation}
 \item[(ii)] if $\zeta_- < c < \bar{v}$, then\\[-0.7cm]
             \begin{equation}
		      \phi(z)=
		      \begin{cases}
			   h(c) + {\bar d} \ \ &\text{if} \ \ z\le c\\
			   0 \ \ & \text{otherwise}
		      \end{cases}
 		    \label{eq:LG0case2}
	         \end{equation}
 \item[(iii)] If $c \leq \zeta_-$, then $\phi(z)= 0$ for all $z$.
\end{description}
In case {\bf (i)}, the function is two-piece convex. It starts from a constant negative value, and its derivative is equal to zero until the point $\bar v$ is reached. After this point, the dependence on $z$ becomes explicit, and the derivative of the function becomes positive. At $z =\bar v$, both the function and its derivative are continuous. Although the function changes its definition at this point, convexity is preserved because the change occurs exactly at the minimum of the nonlinear component. The function then changes definition again at $\min\{\zeta_+,c\}$. If this point coincides with $\zeta_+$, the function remains continuous, as shown in Figure \ref{fig:phi-nl-i2}(left). However, its derivative drops to zero after being positive on the previous interval, and therefore the function is no longer (globally) convex. If, on the other hand, $c \leq \zeta_+$, both continuity and convexity fail, as shown in Figure \ref{fig:phi-nl-i2}(centre). The behaviour is similar to the case $\lambda=0$, where the function increases up to the point $c$, then jumps to zero, and its derivative also drops to zero. In case {\bf (ii)}, the function again starts from a constant negative value and jumps to zero at $c$. Hence, neither continuity nor convexity is preserved, yet the function is two-piece linear, as shown in Figure \ref{fig:phi-nl-i2}(right). In case {\bf (iii)}, the function is equal to zero everywhere. Note that $\bar{z} \geq \min\{\zeta_+,c\}$ due to Proposition \ref{prop:zbar}.

\begin{figure}[htbp]
\resizebox{\textwidth}{4cm}{
\begin{tikzpicture}
\begin{axis}[
    axis lines=middle,
    axis line style={thin},
    xlabel={$z$},
    ylabel={$\phi(z)$},
    xmin=0, xmax=5,
    ymin=-1.5, ymax=1,
    xtick=\empty,
    ytick=\empty,
    clip=false
    ]

\pgfmathsetmacro{\F}{1}
\pgfmathsetmacro{\LamL}{1}
\pgfmathsetmacro{\LamLbar}{0}
\pgfmathsetmacro{\dbar}{-3}

\pgfmathsetmacro{\Rho}{0.3}
\pgfmathsetmacro{\vbar}{sqrt(\LamL/\F)}
\pgfmathsetmacro{\Zeta}{(3+sqrt(5))/2}  
\pgfmathsetmacro{\C}{4}

\pgfmathsetmacro{\Hv}{\F*\vbar + \LamL/\vbar + \LamLbar + \dbar}
\pgfmathsetmacro{\Hzeta}{\F*\Zeta + \LamL/\Zeta + \LamLbar + \dbar}

\addplot[
    thin,
    domain=0:\Rho,
    samples=100
]
{\Hv};

\addplot[
    very thick,
    domain=\Rho:\vbar,
    samples=2
]
{\Hv};

\addplot[
    very thick,
    domain=\vbar:\Zeta,
    samples=200
]
{\F*x + \LamL/x + \LamLbar + \dbar};

\draw[
    line width=1.6pt
]
(axis cs:\Zeta,0) -- (axis cs:4.91,0);

\draw[thick]
    (axis cs:\Rho,-0.08) --
    (axis cs:\Rho,0.08);
coordinates {(\Rho,0)};

coordinates {(\Rho,0)};
\node[below] at (axis cs:\Rho+0.1,0) {$\rho$};

\addplot[
    only marks,
    mark=*,
    thick
]
coordinates {(\Rho,\Hv)};

\addplot[
    only marks,
    mark=*,
    thick
]
coordinates {(\vbar,\Hv)};
\node[below] at (axis cs:\vbar,0) {$\bar v$};

\addplot[
    only marks,
    mark=*,
    thick
]
coordinates {(\Zeta,0)};
\node[below] at (axis cs:\Zeta,0) {$\zeta_+$};

\draw[thick]
    (axis cs:\C,-0.08) --
    (axis cs:\C,0.08);
coordinates {(\C,0)};
\node[below] at (axis cs:\C,0) {$c$};

\node[left] at (axis cs:0,\Hv) {$h(\bar v)+\bar d$};

\node[above right] at (axis cs:1.55,-0.3) {};


\draw[thick]
    (axis cs:\vbar,-0.08) --
    (axis cs:\vbar,0.08);
coordinates {(\vbar,0)};

\end{axis}
\end{tikzpicture}
\hspace{-1.5cm}
\begin{tikzpicture}
\begin{axis}[
    axis lines=middle,
    axis line style={thin},
    xlabel={$z$},
    ylabel={$\phi(z)$},
    xmin=0, xmax=5,
    ymin=-1.5, ymax=1,
    xtick=\empty,
    ytick=\empty,
    clip=false
]

\pgfmathsetmacro{\F}{1}
\pgfmathsetmacro{\LamL}{1}
\pgfmathsetmacro{\LamLbar}{0}
\pgfmathsetmacro{\dbar}{-3}

\pgfmathsetmacro{\Rho}{0.3}
\pgfmathsetmacro{\vbar}{sqrt(\LamL/\F)}
\pgfmathsetmacro{\C}{2}
\pgfmathsetmacro{\Zeta}{(3+sqrt(5))/2}  

\pgfmathsetmacro{\Hv}{\F*\vbar + \LamL/\vbar + \LamLbar + \dbar}
\pgfmathsetmacro{\HC}{\F*\C + \LamL/\C + \LamLbar + \dbar}
\pgfmathsetmacro{\Hzeta}{\F*\Zeta + \LamL/\Zeta + \LamLbar + \dbar}

\addplot[
    thin,
    domain=0:\Rho,
    samples=100
]
{\Hv};

\addplot[
    very thick,
    domain=\Rho:\vbar,
    samples=2
]
{\Hv};

\addplot[
    very thick,
    domain=\vbar:\C,
    samples=200
]
{\F*x + \LamL/x + \LamLbar + \dbar};

\draw[
    line width=1.6pt
]
(axis cs:\C+0.02,0) -- (axis cs:4.91,0);


\draw[thick]
    (axis cs:\Rho,-0.08) --
    (axis cs:\Rho,0.08);
coordinates {(\Rho,0)};
\node[below] at (axis cs:\Rho+0.1,0) {$\rho$};

\addplot[
    only marks,
    mark=*,
    thick
]
coordinates {(\Rho,\Hv)};

\addplot[
    only marks,
    mark=*,
    thick
]
coordinates {(\vbar,\Hv)};


\draw[thick]
    (axis cs:\vbar,-0.08) --
    (axis cs:\vbar,0.08);
coordinates {(\vbar,0)};
\node[below] at (axis cs:\vbar,0) {$\bar v$};

\addplot[
    only marks,
    mark=*,
    thick
]
coordinates {(\C,\HC)};

\addplot[
    only marks,
    mark=o,
    thick
]
coordinates {(\C,0)};

\node[below] at (axis cs:\C-0.2,0) {$c$};

\draw[thick]
    (axis cs:\Zeta,-0.08) --
    (axis cs:\Zeta,0.08);
coordinates {(\Zeta,0)};
\node[below] at (axis cs:\Zeta,0) {$\zeta_+$};

\node[left, xshift=-4pt] at (axis cs:\C,\HC) {};

\node[left] at (axis cs:0,\Hv) {$h(\bar v)+\bar d$};



\end{axis}
\end{tikzpicture}
\hspace{-1.5cm}
\begin{tikzpicture}
\begin{axis}[
    axis lines=middle,
    axis line style={thin},
    xlabel={$z$},
    ylabel={$\phi(z)$},
    xmin=0, xmax=5,
    ymin=-1.5, ymax=1,
    xtick=\empty,
    ytick=\empty,
    clip=false
]

\pgfmathsetmacro{\F}{1}
\pgfmathsetmacro{\LamL}{1}
\pgfmathsetmacro{\LamLbar}{0}
\pgfmathsetmacro{\dbar}{-3}

\pgfmathsetmacro{\Rho}{0.3}
\pgfmathsetmacro{\C}{0.7}
\pgfmathsetmacro{\vbar}{sqrt(\LamL/\F)}
\pgfmathsetmacro{\Zeta}{(3+sqrt(5))/2}  

\pgfmathsetmacro{\Hv}{\F*\vbar + \LamL/\vbar + \LamLbar + \dbar}

\addplot[
    thin,
    domain=0:\Rho,
    samples=2
]
{\Hv};

\addplot[
    very thick,
    domain=\Rho:\C,
    samples=2
]
{\Hv};

\draw[
    line width=2.0pt
]
(axis cs:\C+0.02,0) -- (axis cs:4.91,0);



\draw[thick]
    (axis cs:\Rho,-0.08) --
    (axis cs:\Rho,0.08);
coordinates {(\Rho,0)};
\node[below] at (axis cs:\Rho-0.1,0) {$\rho$};

\addplot[
    only marks,
    mark=*,
    thick
]
coordinates {(\Rho,\Hv)};

\addplot[
    only marks,
    mark=*,
    thick
]
coordinates {(\C,\Hv)};

\addplot[
    only marks,
    mark=o,
    thick
]
coordinates {(\C,0)};

\node[below] at (axis cs:\C-0.1,0) {$c$};

\draw[thick]
    (axis cs:\vbar,-0.08) --
    (axis cs:\vbar,0.08);
coordinates {(\vbar,0)};
\node[below] at (axis cs:\vbar+0.2,0) {$\bar v$};

\draw[thick]
    (axis cs:\Zeta,-0.08) --
    (axis cs:\Zeta,0.08);
coordinates {(\Zeta,0)};
\node[below] at (axis cs:\Zeta,0) {$\zeta_+$};

\node[left] at (axis cs:0,\Hv) {$h(c)+\bar d$};



\end{axis}
\end{tikzpicture}
}
\caption{$\lambda > 0$: \eqref{eq:LG0case1} with $c \geq \zeta_+$ (left) and $\zeta_+ \geq c$ (center) and \eqref{eq:LG0case2} (right)}
\label{fig:phi-nl-i2}
\end{figure}

\subsection{Approximations of $\phi(z)$}\label{ssec:approxphi}

The analysis of $\phi(z)$ allows to construct a two-piece linear (lower) approximation for it. The left and right piece of our approximation correspond to the left and right interval of the domain with respect to the point $\bar{z}$, namely $V_1= [\rho,\, \bar{z}]$ and $V_2[\bar{z},\, c_{max}]$, respectively. According to Proposition \ref{prop:zbar}, $\phi(z) = 0, \forall\, z \in V_2$, so the right piece of the approximation is always zero (i.e., it coincides with the original function value). Therefore we focus on how to (lower) approximate the left piece with a linear function $\phi^-(z)$. In the following, the notation $\ell(a,b)_{\phi}$ is used to represent the line passing through the points $\left(a,\phi(a)\right)$ and $\left(b,\phi(b)\right)$.

\subsubsection{The case $\lambda = 0$}

Since $\bar{z} \geq \min\{p,c\}$, we construct a line that underestimates $\phi(z)$ in $V_1$ as follows:
%
%
\begin{equation}
		\phi^-(z)=\begin{cases}
			\ell(\rho,\bar{z})_{\phi} \ \ &\text{if} \ \ \bar{z}\ge p\\
			 \ell(c,\bar{z})_{\phi} \ \ & \text{otherwise}
		\end{cases}
\end{equation}
Clearly, the case $\min\{p,c\} = p$ implies $\bar{z} \geq p$, as shown in Figure \ref{fig:phiapp-LZ}(left). If, instead, $\min\{p,c\} = c$, we need to distinguish two cases, as shown in Figure \ref{fig:phiapp-LZ}(centre and right). Note that the relative position between $\bar{z}$ and $c$ is irrelevant.

\begin{figure}[htbp]
\resizebox{\textwidth}{4cm}{
\begin{tikzpicture}
\begin{axis}[
    axis lines=middle,
    axis line style={thin},
    xlabel={$z$},
    ylabel={$\phi(z)$},
    xmin=-1, xmax=6,
    ymin=-3, ymax=1,
    xtick=\empty,
    ytick=\empty,
    clip=false
]

\pgfmathsetmacro{\P}{3}       
\pgfmathsetmacro{\Rho}{0.4}   
\pgfmathsetmacro{\C}{4}       
\pgfmathsetmacro{\F}{1}       
\pgfmathsetmacro{\D}{-3}      

\pgfmathsetmacro{\Yr}{\F*\Rho+\D}
\pgfmathsetmacro{\Yp}{\F*\P+\D}

\pgfmathsetmacro{\zbar}{(\P+\C)/2}

\draw[thick]
    (axis cs:\zbar,-0.08) --
    (axis cs:\zbar,0.08);

\addplot[
    only marks,
    mark=*,
    thick
]
coordinates {(\zbar,0)};

\node[above] at (axis cs:\zbar,+0.1) {$\bar z$};

\addplot[
    thin,
    domain=0:\Rho,
    samples=100
]
{\F*x + \D};

\addplot[
    very thick,
    domain=\Rho:\P,
    samples=100
]
{\F*x + \D};


\draw[
    line width=1.6pt
]
(axis cs:\P,0) -- (axis cs:6,0);

\draw[thick]
    (axis cs:\Rho,-0.08) --
    (axis cs:\Rho,0.08);
coordinates {(\Rho,0)};

\addplot[
    only marks,
    mark=*,
    thick
]
coordinates {(\Rho,\Yr)};

\node[below] at (axis cs:\Rho+0.2,0) {$\rho$};

\addplot[
    only marks,
    mark=*,
    thick
]
coordinates {(\P,0)};

\node[above] at (axis cs:\P,0) {$p$};

\draw[thick]
    (axis cs:\C,-0.08) --
    (axis cs:\C,0.08);
coordinates {(\C,0)};

\node[above] at (axis cs:\C,0) {$c$};

\node[left] at (axis cs:0,\D) {$\bar d$};

\addplot[
    very thick,
    dashed
]
coordinates {(\Rho,\Yr) (\zbar,0)};

\end{axis}
\end{tikzpicture}
\begin{tikzpicture}
\begin{axis}[
    axis lines=middle,
    xlabel={$z$},
    ylabel={$\phi(z)$},
    xmin=-1, xmax=6,
    ymin=-5, ymax=1,
    xtick=\empty,
    ytick=\empty,
    clip=false
]

\pgfmathsetmacro{\F}{1}
\pgfmathsetmacro{\D}{-3}
\pgfmathsetmacro{\P}{-\D/\F}   
\pgfmathsetmacro{\C}{2}        
\pgfmathsetmacro{\Rho}{0.4}    

\pgfmathsetmacro{\YC}{\F*\C+\D}
\pgfmathsetmacro{\Yr}{\F*\Rho+\D}

\pgfmathsetmacro{\zbar}{(\C+\P)/2}

\pgfmathsetmacro{\mapp}{-\YC/(\zbar-\C)}

\pgfmathsetmacro{\YappRho}{\mapp*(\Rho-\zbar)}

\addplot[
    thin,
    domain=0:\Rho,
    samples=100
]
{\F*x + \D};

\addplot[
    very thick,
    domain=\Rho:\C,
    samples=100
]
{\F*x + \D};

\addplot[
    very thick,
    domain=\C:6,
    samples=2
]
{0};


\draw[thick]
    (axis cs:\Rho,-0.08) --
    (axis cs:\Rho,0.08);
coordinates {(\Rho,0)};

\addplot[
    only marks,
    mark=*,
    thick
]
coordinates {(\Rho,\Yr)};

\node[below] at (axis cs:\Rho+0.2,0) {$\rho$};

\addplot[
    only marks,
    mark=*,
    thick
]
coordinates {(\C,\YC)};

\addplot[
    only marks,
    mark=o,
    thick
]
coordinates {(\C,0)};

\draw[thick]
    (axis cs:\P,-0.08) --
    (axis cs:\P,0.08);
coordinates {(\P,0)};

\node[below left] at (axis cs:\C,0) {$c$};

\node[below right] at (axis cs:\P,0)
{$p$};

\node[left] at (axis cs:0,\D) {$\bar d$};

\node[
    left,
    xshift=-4pt
] at (axis cs:\C,\YC) {};



\addplot[
    only marks,
    mark=*,
    thick
]
coordinates {(\zbar,0)};

\node[above] at (axis cs:\zbar,0) {$\bar z$};

\addplot[
    very thick,
    dashed,
    domain=\Rho:\zbar,
    samples=2
]
{\mapp*(x-\zbar)};

\addplot[
    only marks,
    mark=*,
    thick
]
coordinates {(\Rho,\YappRho)};

\end{axis}
\end{tikzpicture}
\begin{tikzpicture}
\begin{axis}[
    axis lines=middle,
    xlabel={$z$},
    ylabel={$\phi(z)$},
    xmin=-1, xmax=6,
    ymin=-3, ymax=1,
    xtick=\empty,
    ytick=\empty,
    clip=false
]

\pgfmathsetmacro{\F}{1}
\pgfmathsetmacro{\D}{-3}
\pgfmathsetmacro{\P}{-\D/\F}        
\pgfmathsetmacro{\C}{2}             
\pgfmathsetmacro{\Rho}{0.4}         
\pgfmathsetmacro{\zbar}{(\P+6)/2}   

\pgfmathsetmacro{\YC}{\F*\C+\D}
\pgfmathsetmacro{\Yr}{\F*\Rho+\D}

\addplot[
    thin,
    domain=0:\Rho,
    samples=100
]
{\F*x + \D};

\addplot[
    very thick,
    domain=\Rho:\C,
    samples=100
]
{\F*x + \D};

\addplot[
    very thick,
    domain=\C:6,
    samples=2
]
{0};

\draw[thick]
    (axis cs:\Rho,-0.08) --
    (axis cs:\Rho,0.08);

\addplot[
    only marks,
    mark=*,
    thick
]
coordinates {(\Rho,\Yr)};

\node[below] at (axis cs:\Rho+0.2,0) {$\rho$};

\addplot[
    only marks,
    mark=*,
    thick
]
coordinates {(\C,\YC)};

\addplot[
    only marks,
    mark=o,
    thick
]
coordinates {(\C,0)};

\draw[thick]
    (axis cs:\P,-0.08) --
    (axis cs:\P,0.08);

\node[below left] at (axis cs:\C,0) {$c$};

\node[below right] at (axis cs:\P,0) {$p$};

\draw[thick]
    (axis cs:\zbar,-0.08) --
    (axis cs:\zbar,0.08);

\node[below] at (axis cs:\zbar,0) {$\bar z$};

\addplot[
    very thick,
    dashed
]
coordinates {(\Rho,\Yr) (\zbar,0)};

\node[left] at (axis cs:0,\D) {$\bar d$};

\addplot[
    only marks,
    mark=*,
    thick
]
coordinates {(\zbar,0)};

\end{axis}
\end{tikzpicture}
}
\caption{Approximating the case $\lambda = 0$: $p \leq c \implies \bar{z} \ge p$ (left), $c \leq p$ and $\bar{z} \geq p$ (centre), $c \leq p$ and $\bar{z} \leq p$ (right)}
\label{fig:phiapp-LZ}
\end{figure}

\subsubsection{The case $\lambda > 0$}

We consider the same three cases described in \S \ref{sssec:philamg0}.
\begin{itemize}
\item[(i)] If $\bar{v}\le c$, the function $\phi(z)$ consists of a nonlinear convex part and a zero part, possibly with a jump at $c$. We first consider the line joining the points on $\phi(z)$ corresponding to $\bar{v}$ and $\bar{z}$. In general, this line intersects $\phi(z)$; therefore, to obtain a valid lower approximation, we shift it downward until it becomes tangent to $h(z)$, i.e., we take $\phi^-(z)=\ell^{\rightarrow}(\bar{v},\bar{z})_{\phi}$ as shown in Figure \ref{fig:appphiLG0}(left and centre).
\item[(ii)] If $\zeta_- < c < \bar{v}$, the function $\phi(z)$ consists of two linear constant pieces with a jump at $c$. To construct a linear lower approximation, we simply take the line joining the points on $\phi(z)$ corresponding to $c$ and $\bar{z}$, i.e., $\phi^-(z)=\ell(c,\bar{z})_{\phi}$ as shown in Figure \ref{fig:appphiLG0}(right).
\item[(iii)] If $c \leq \zeta_-$, the function $\phi(z)$ is identically zero, and therefore it coincides with its approximation: $\phi^-(z)= \phi(z)$.
\end{itemize}

\begin{figure}[htbp]
\resizebox{\textwidth}{4cm}{
\begin{tikzpicture}
\begin{axis}[
    axis lines=middle,
    axis line style={thin},
    xlabel={$z$},
    ylabel={$\phi(z)$},
    xmin=0, xmax=5,
    ymin=-1.5, ymax=1,
    xtick=\empty,
    ytick=\empty,
    clip=false
]

\pgfmathsetmacro{\F}{1}
\pgfmathsetmacro{\LamL}{1}
\pgfmathsetmacro{\LamLbar}{0}
\pgfmathsetmacro{\dbar}{-3}

\pgfmathsetmacro{\Rho}{0.3}
\pgfmathsetmacro{\vbar}{sqrt(\LamL/\F)}
\pgfmathsetmacro{\Zeta}{(3+sqrt(5))/2}
\pgfmathsetmacro{\zbar}{2.75}       
\pgfmathsetmacro{\C}{4}

\pgfmathsetmacro{\Hv}{\F*\vbar + \LamL/\vbar + \LamLbar + \dbar}
\pgfmathsetmacro{\Hzeta}{\F*\Zeta + \LamL/\Zeta + \LamLbar + \dbar}


\pgfmathsetmacro{\mapp}{-\Hv/(\zbar-\vbar)}

\pgfmathsetmacro{\ztan}{sqrt(\LamL/(\F-\mapp))}

\pgfmathsetmacro{\Gtan}{\F*\ztan + \LamL/\ztan + \LamLbar + \dbar}

\pgfmathsetmacro{\Yzbar}{\mapp*(\zbar-\ztan)+\Gtan}

\pgfmathsetmacro{\YrhoApp}{\mapp*(\Rho-\ztan)+\Gtan}


\addplot[
    thin,
    domain=0:\Rho,
    samples=100
]
{\Hv};

\addplot[
    very thick,
    domain=\Rho:\vbar,
    samples=2
]
{\Hv};

\addplot[
    very thick,
    domain=\vbar:\Zeta,
    samples=200
]
{\F*x + \LamL/x + \LamLbar + \dbar};

\draw[
    line width=1.6pt
]
(axis cs:\Zeta,0) -- (axis cs:4.91,0);

\draw[thick]
    (axis cs:\Rho,-0.08) --
    (axis cs:\Rho,0.08);

\node[below] at (axis cs:\Rho+0.1,0) {$\rho$};

\addplot[
    only marks,
    mark=*,
    thick
]
coordinates {(\Rho,\Hv)};

\addplot[
    only marks,
    mark=*,
    thick
]
coordinates {(\vbar,\Hv)};

\draw[thick]
    (axis cs:\vbar,-0.08) --
    (axis cs:\vbar,0.08);

\node[below] at (axis cs:\vbar,0) {$\bar v$};

\addplot[
    only marks,
    mark=*,
    thick
]
coordinates {(\Zeta,0)};

\node[above] at (axis cs:\Zeta,0) {$\zeta_+$};

\draw[thick]
    (axis cs:\zbar,-0.08) --
    (axis cs:\zbar,0.08);

\node[below right] at (axis cs:\zbar,0) {$\bar z$};

\draw[thick]
    (axis cs:\C,-0.08) --
    (axis cs:\C,0.08);

\node[below] at (axis cs:\C,0) {$c$};

\node[left] at (axis cs:0,\Hv) {$h(\bar v)+\bar d$};


\addplot[
    very thick,
    dashed,
    domain=\Rho:\zbar,
    samples=2
]
{\mapp*(x-\ztan)+\Gtan};



\addplot[
    only marks,
    mark=*,
    thick
]
coordinates {(\zbar,\Yzbar)};

\draw[dotted]
    (axis cs:\zbar,0) --
    (axis cs:\zbar,\Yzbar);

\addplot[
    only marks,
    mark=*,
    thick
]
coordinates {(\Rho,\YrhoApp)};

\end{axis}
\end{tikzpicture}
\hspace{-1.5cm}
\begin{tikzpicture}
\begin{axis}[
    axis lines=middle,
    axis line style={thin},
    xlabel={$z$},
    ylabel={$\phi(z)$},
    xmin=0, xmax=5,
    ymin=-1.5, ymax=1,
    xtick=\empty,
    ytick=\empty,
    clip=false
]

\pgfmathsetmacro{\F}{1}
\pgfmathsetmacro{\LamL}{1}
\pgfmathsetmacro{\LamLbar}{0}
\pgfmathsetmacro{\dbar}{-3}

\pgfmathsetmacro{\Rho}{0.3}
\pgfmathsetmacro{\vbar}{sqrt(\LamL/\F)}
\pgfmathsetmacro{\C}{2}
\pgfmathsetmacro{\zbar}{2.25}        
\pgfmathsetmacro{\Zeta}{(3+sqrt(5))/2}

\pgfmathsetmacro{\Hv}{\F*\vbar + \LamL/\vbar + \LamLbar + \dbar}
\pgfmathsetmacro{\HC}{\F*\C + \LamL/\C + \LamLbar + \dbar}
\pgfmathsetmacro{\Hzeta}{\F*\Zeta + \LamL/\Zeta + \LamLbar + \dbar}


\pgfmathsetmacro{\mapp}{-\Hv/(\zbar-\vbar)}

\pgfmathsetmacro{\ztan}{sqrt(\LamL/(\F-\mapp))}

\pgfmathsetmacro{\Gtan}{\F*\ztan + \LamL/\ztan + \LamLbar + \dbar}

\pgfmathsetmacro{\Yzbar}{\mapp*(\zbar-\ztan)+\Gtan}

\pgfmathsetmacro{\YrhoApp}{\mapp*(\Rho-\ztan)+\Gtan}


\addplot[
    thin,
    domain=0:\Rho,
    samples=100
]
{\Hv};

\addplot[
    very thick,
    domain=\Rho:\vbar,
    samples=2
]
{\Hv};

\addplot[
    very thick,
    domain=\vbar:\C,
    samples=200
]
{\F*x + \LamL/x + \LamLbar + \dbar};

\draw[
    line width=1.6pt
]
(axis cs:\C+0.02,0) -- (axis cs:4.91,0);

\draw[thick]
    (axis cs:\Rho,-0.08) --
    (axis cs:\Rho,0.08);

\node[below] at (axis cs:\Rho+0.1,0) {$\rho$};

\addplot[
    only marks,
    mark=*,
    thick
]
coordinates {(\Rho,\Hv)};

\addplot[
    only marks,
    mark=*,
    thick
]
coordinates {(\vbar,\Hv)};

\draw[thick]
    (axis cs:\vbar,-0.08) --
    (axis cs:\vbar,0.08);

\node[below] at (axis cs:\vbar,0) {$\bar v$};

\addplot[
    only marks,
    mark=*,
    thick
]
coordinates {(\C,\HC)};

\addplot[
    only marks,
    mark=o,
    thick
]
coordinates {(\C,0)};

\node[below] at (axis cs:\C,0) {$c$};

\draw[thick]
    (axis cs:\zbar,-0.08) --
    (axis cs:\zbar,0.08);

\node[above] at (axis cs:\zbar,0) {$\bar z$};

\draw[thick]
    (axis cs:\Zeta,-0.08) --
    (axis cs:\Zeta,0.08);

\node[above] at (axis cs:\Zeta,0) {$\zeta_+$};

\node[left] at (axis cs:0,\Hv) {$h(\bar v)+\bar d$};


\addplot[
    very thick,
    dashed,
    domain=\Rho:\zbar,
    samples=2
]
{\mapp*(x-\ztan)+\Gtan};



\addplot[
    only marks,
    mark=*,
    thick
]
coordinates {(\zbar,\Yzbar)};


\addplot[
    only marks,
    mark=*,
    thick
]
coordinates {(\Rho,\YrhoApp)};

\end{axis}
\end{tikzpicture}
\hspace{-1.5cm}
\begin{tikzpicture}
\begin{axis}[
    axis lines=middle,
    axis line style={thin},
    xlabel={$z$},
    ylabel={$\phi(z)$},
    xmin=0, xmax=5,
    ymin=-1.5, ymax=1,
    xtick=\empty,
    ytick=\empty,
    clip=false
]

\pgfmathsetmacro{\F}{1}
\pgfmathsetmacro{\LamL}{1}
\pgfmathsetmacro{\LamLbar}{0}
\pgfmathsetmacro{\dbar}{-3}

\pgfmathsetmacro{\Rho}{0.3}
\pgfmathsetmacro{\C}{0.7}
\pgfmathsetmacro{\vbar}{sqrt(\LamL/\F)}
\pgfmathsetmacro{\zbar}{1.8}          
\pgfmathsetmacro{\Zeta}{(3+sqrt(5))/2}

\pgfmathsetmacro{\Hv}{\F*\vbar + \LamL/\vbar + \LamLbar + \dbar}

\pgfmathsetmacro{\YC}{\Hv}

\pgfmathsetmacro{\mapp}{-\YC/(\zbar-\C)}

\pgfmathsetmacro{\YappRho}{\mapp*(\Rho-\zbar)}

\addplot[
    thin,
    domain=0:\Rho,
    samples=2
]
{\Hv};

\addplot[
    very thick,
    domain=\Rho:\C,
    samples=2
]
{\Hv};

\draw[
    line width=2.0pt
]
(axis cs:\C+0.02,0) -- (axis cs:4.91,0);

\draw[thick]
    (axis cs:\Rho,-0.08) --
    (axis cs:\Rho,0.08);

\node[below] at (axis cs:\Rho-0.1,0) {$\rho$};

\addplot[
    only marks,
    mark=*,
    thick
]
coordinates {(\Rho,\Hv)};

\addplot[
    only marks,
    mark=*,
    thick
]
coordinates {(\C,\Hv)};

\addplot[
    only marks,
    mark=o,
    thick
]
coordinates {(\C,0)};

\node[below] at (axis cs:\C-0.1,0) {$c$};

\draw[thick]
    (axis cs:\vbar,-0.08) --
    (axis cs:\vbar,0.08);

\node[below] at (axis cs:\vbar+0.2,0) {$\bar v$};

\draw[thick]
    (axis cs:\zbar,-0.08) --
    (axis cs:\zbar,0.08);

\node[below] at (axis cs:\zbar,0) {$\bar z$};

\draw[thick]
    (axis cs:\Zeta,-0.08) --
    (axis cs:\Zeta,0.08);

\node[below] at (axis cs:\Zeta,0) {$\zeta_+$};

\node[left] at (axis cs:0,\Hv) {$h(c)+\bar d$};


\addplot[
    very thick,
    dashed,
    domain=\Rho:\zbar,
    samples=2
]
{\mapp*(x-\zbar)};

\addplot[
    only marks,
    mark=*,
    thick
]
coordinates {(\Rho,\YappRho)};

\addplot[
    only marks,
    mark=*,
    thick
]
coordinates {(\zbar,0)};

\end{axis}
\end{tikzpicture}
}
\caption{Approximations for $\lambda > 0$: \eqref{eq:LG0case1} with $c \geq \zeta_+$ (left) and $\zeta_+ \geq c$ (center) and \eqref{eq:LG0case2} (right)}
\label{fig:appphiLG0}
\end{figure}

\subsection{Overall approximation of $\psi(z)$}

The previous analysis finally provides a lower approximation of the value function $\underline{v}(z)$. With, as usual, $\lambda$ and $\pi$ those computed in the given $\bar z$, the original function is
\[
 \textstyle
 \psi(z)= -\lambda \delta - \sum_{i \in N} \pi_i b_i
 + \frac{\lambda \sigma}{z} + \sum_{(i,j) \in A}\phi_{ij}(z)
 \; .
\]
The term $-\lambda \delta - \sum_{i \in N} \pi_i b_i$ is a constant and therefore we just carry it through in the approximation. The term $g(z) = \lambda \sigma / z$ is convex and differentiable, and therefore its first-order model $g(\bar{z}) + g'(\bar{z})( z - \bar{z} )$ is a valid lower approximation everywhere, with $g'(\bar{z}) = - \lambda\sigma / \bar{z}^2$. Then, for each arc $(i,j) \in A$, we replace the corresponding function $\phi_{ij}(z)$ with its two-piece linear approximation described in \S \ref{ssec:approxphi}. In particular, since each two-piece linear function has the same breakpoint at $\bar{z}$, their sum is still a two-piece linear function with a breakpoint at $\bar{z}$. Hence, we obtain an overall two-piecewise-linear nonconvex approximation $\ell_-(z)$ on $z \in [ \, \rho \,,\, \bar{z} \, ]$ and $\ell_+(z)$ on $z \in [ \, \bar{z} \,,\, c \, ]$, as illustrated in Figure \ref{fig:appall}. This allows us to solve the problem of minimizing $\underline{v}(z)$ on $[ \, \rho \,,\, c \, ]$ with a \emph{spatial Branch-and-Bound} approach, as discussed next.

\begin{figure}[htbp]
\centering
\begin{tikzpicture}
\begin{axis}[
    axis lines=middle,
    axis line style={thin},
    xlabel={$z$},
    ylabel={},
    xmin=0, xmax=6,
    ymin=-8, ymax=10,
    xtick=\empty,
    ytick=\empty,
    clip=false,
    samples=200
]

\pgfmathsetmacro{\zbar}{2}

\pgfmathsetmacro{\Hzbar}{5/\zbar + 3*\zbar - 8}
\pgfmathsetmacro{\Fzbar}{5/\zbar}

\pgfmathsetmacro{\mleft}{-5/(\zbar*\zbar) + 3}
\pgfmathsetmacro{\mright}{-5/(\zbar*\zbar)}

\addplot[
    thin,
    domain=0.5:6
]
{5/x};

\addplot[
    thin,
    dashed,
    domain=0:6
]
{3*x - 8};

\addplot[
    thin,
    dotted,
    domain=0.5:6
]
{5/x + 3*x - 8};

\addplot[
    ultra thick,
    domain=0.5:\zbar,
    samples=200
]
{5/x + 3*x - 8};

\addplot[
    ultra thick,
    domain=\zbar:6,
    samples=200
]
{5/x};

\draw[dash pattern=on 8pt off 3pt]
    (axis cs:\zbar,-1.5) --
    (axis cs:\zbar,4.2);

\draw[thick]
    (axis cs:\zbar,-0.15) --
    (axis cs:\zbar,0.15);

\node[below left] at (axis cs:\zbar,0) {$\bar z$};

\addplot[
    only marks,
    mark=*,
    thick
]
coordinates {(\zbar,\Hzbar)};

\addplot[
    only marks,
    mark=o,
    thick
]
coordinates {(\zbar,\Fzbar)};

\addplot[
    very thick,
    domain=0.70:2.15,
    samples=2
]
{\Hzbar + \mleft*(x-\zbar)};

\addplot[
    very thick,
    domain=2.0:4.65,
    samples=2
]
{\Fzbar + \mright*(x-\zbar)};

\node[right] at (axis cs:0.85,6.0) {$g(z)$};

\node[right] at (axis cs:4.8,5.5) {$\tilde{\phi}(z)$};

\node[right] at (axis cs:0.6,2.0) {$\tilde{\psi}(z)$};

\node[below left] at (axis cs:1.35,-1.20) {$ \ell_-(z)$};

\node[below right] at (axis cs:3.75,-0.65) {$ \ell_+(z)$};

\end{axis}
\end{tikzpicture}

\caption{Overall two-piece linear approximation.}
\label{fig:appall}

\end{figure}


\section{The Full Algorithm}\label{sec:algo}

In this section we describe our full {\em spatial Benders} algorithm, which combines Benders decomposition with a spatial partitioning of the domain of the variable $z$. At each iteration, we build a piecewise-linear-lower approximation of the Benders function $\psi(z)$, starting from a fixed breakpoint $\bar z$. This breakpoint divides the domain into two subintervals. Each affine piece of the approximation is associated with the subinterval on which it is valid and where it acts as a lower approximation for the original Benders function, as described in Section \ref{sec:approx}. Then, the main step of the algorithm consists in performing a line search on each subinterval separately, to minimize the pointwise maximum of the affine cuts that are valid there. This gives one candidate point per subinterval, at which the Benders function is evaluated and a new piecewise-linear-lower approximation is constructed. The new affine cuts are added to the corresponding subintervals, and the procedure is repeated. The method can be interpreted as a {\em spatial Benders cutting-plane} scheme, since the domain is partitioned into regions, and on each region the Benders function is approximated from below by the maximum of the cuts generated so far. In the following we describe the algorithm in detail.

\subsection{Spatial Benders procedure}
\label{subsec:spatial-benders}

We keep the current set $\mathcal I$ of \emph{active intervals}, each a sub-interval of $V^0 = [\rho, \, c_{\max}]$, and initially $\mathcal I = \{ \, V^0 \, \}$. Each active interval $V \in \mathcal I$ is associated with a set $\mathcal C(V)$ containing all affine cuts valid on $V$, and therefore allows to define the {\em local} Benders' {\em restricted master problem} (RMP)
\begin{align*}
 LB_V
 =
 \min\quad & z_0
 \\
 & z_0\geq \ell(z)
 && \forall \ell\in\mathcal C(V),
 \\
 & z \in V
\end{align*}
minimizing their pointwise maximum. With $\bar z_V$ its optimal solution, $LB_V$ is a valid lower bound on the optimal value of the original problem restricted to the interval $V$. At each iteration of the algorithm, the \emph{global} RMP
\begin{equation}
 LB =
 \min_{} \{ \, LB_V \;:\; V \in {\mathcal I} \, \}
 \label{eq:global-lower-bound}
\end{equation}
therefore provides a \emph{globally valid} lower bound $LB$ associated with a global optimal solution $\bar z$. The corresponding Lagrangian dual problem \eqref{eq:LagDfix-minlp-alt} is solved, yielding the optimal multipliers $(\bar\pi,\bar\lambda)$. As discussed in Section  \ref{sec:approx}, the function $\psi(\bar\pi,\bar\lambda;z)$ is non-convex and cannot be directly included in the master problem through a single globally valid affine cut. Therefore, the domain of $z$ is treated {\em spatially}. In particular, the sub-interval $\bar{V} = [ \, \bar{a}_- \,,\, \bar{a}_+ \, ]$ to which $\bar{z}$ belongs is partitioned at $\bar z$ into the two sub-intervals
\begin{equation*}
 \bar{V}_- =  [ \, \bar{a}_- \,,\, \bar{z} \, ],
 \qquad
 \bar{V}_+ = [ \, \bar{z} \,,\, \bar{a}_+ \, ]
 \label{eq:initial-spatial-partition}
\end{equation*}
and, for each of those, a different affine lower cut is constructed. Denoting the two cuts by $\ell_-(z)$ and $\ell_+(z)$, they satisfy
\begin{align*}
 \ell_-(z)
 &\leq
 \psi(\bar\pi,\bar\lambda;z)
 && \forall z\in \bar{V}_-,
 \\
 \ell_+(z)
 &\leq
 \psi(\bar\pi,\bar\lambda;z)
 && \forall z\in \bar{V}_+.
\end{align*}
Thus, in general, each partitioning operation generates two child intervals and one new affine lower cut for each child. Note that each cut is {\em local}, since it is guaranteed to be valid only on the associated interval. Yet, every cut valid on the parent interval remains valid on each of its subintervals, i.e., the children inherit all the cuts valid on the parent and receive their corresponding new cut:
\begin{align*}
 \mathcal C(\bar{V}_-)
 &=
 \mathcal C(\bar{V})\cup\{\ell_-(z)\}
 \\
 \mathcal C(\bar{V}_+)
 &=
 \mathcal C(\bar{V})\cup\{\ell_+(z)\}
\end{align*}
and the parent interval is replaced by its two children
\begin{equation*}
 \mathcal I =
 \mathcal I \setminus \{ \, \bar{V} \, \} \cup
 \{ \, \bar{V}_- \,,\, \bar{V}_+ \, \}.
\end{equation*}
Each newly generated child is subsequently processed in the same way: the local RMP problem provides a new candidate point, which is used both to evaluate the Lagrangian dual problem and to divide the corresponding interval into two further children.

\smallskip
\noindent
As discussed in Section \ref{sec:lagsolve}, the solution of the Lagrangian dual also produces a valid upper bound, and the best of these found across the search provides, as usual, the \emph{incumbent} solution and its associated upper bound $UB$. Hence, an active node $V$ can be safely fathomed whenever $LB_V \geq UB -\varepsilon$, where $\varepsilon$ denotes the required optimality tolerance. At each iteration, therefore, the optimal value is bracketed between $UB$ and the the globally valid lower bound $LB$ of \eqref{eq:global-lower-bound}; the typical selection strategy of the next interval $\bar{V}$ to process being the one yielding $LB$, i.e., the classical \emph{best-first} strategy aiming at improving the global lower bound. The entire algorithm terminates when the active set $\mathcal I$ is empty, which by standard arguments proves that the global optimality gap satisfies $UB-LB\leq\varepsilon$.

\smallskip
\noindent
The proposed algorithm was implemented and tested. In the next section,
its computational behaviour is compared with that of a general-purpose
solver.


\section{Computational Experiments}\label{sec:results}

The {\em Spatial Benders} (SB) algorithm presented in the previous sections was coded in {\tt C++} in the context of the {\tt SMS++} project \cite{SMS++}. In particular, the specific repository
\begin{center}
 \url{https://gitlab.com/smspp/SingleFlowDCRBlock}
\end{center}
was released, containing both {\tt SingleFlowDCRBlock}, the {\tt SMS++ Block} describing the problem, and {\tt SingleFlowDCRBendersSolver} implementing SB within the general {\tt SMS++ Solver} interface. Owing to the standard approach in {\tt SMS++}, {\tt SingleFlowDCRBlock} implements both the ``physical representation'' of the DCR problem---the data of the problem, that specialised {\tt Solver} uses--and the ``abstract representation'', i.e., the {\tt Variable}, {\tt Constraint} and {\tt Objective} that implement the convex MINLP formulation \eqref{eq:of}--\eqref{eq:sfsprmin1}, so that it can be solved by off-the-shelf general-purpose solvers (in our experiments, {\tt Gurobi 13.0.0}). In particular, the crucial nonlinear constraint \eqref{eq:cdelay} was implemented using both the standard reformulation of the perspective term as a Second-Order Cone \cite{FrGe09a}, and the dynamic separation of Perspective Cuts \cite{FG2006}; the latter form was most often shown to be preferable, but to be fair we always reported the best result obtained between the two.

\smallskip
\noindent
All code was compiled with {\tt clang++ 18.1.3} and {\tt -O3} optimization level. All experiments were executed on a single thread of an AMD EPYC 9654 96-Core Processor running at 2.40 GHz, 1.5 TB of RAM, and operating under Ubuntu  24.04.4 LTS. For all approaches, a (relative) tolerance parameter of {\tt 1e-6} was  set.

\subsection{Test instances}

For the test instances we used the same generation procedure described in \cite{FGS2015a}, which we briefly review for the sake of completeness. We considered both real-world and synthetic network topologies. The real-world instances belong to three different classes: GARR \cite{garr}, Internet Topology Zoo \cite{topo} and SNDlib \cite{sndlib}. The synthetic class is generated using the {Waxman} model \cite{waxman}, with $n$ ranging from $100$ to $800$. The information on the topology is then combined with realistic telecommunication network parameters as well as traffic matrices (i.e., flows), that are generated using the {\tt FNSS} tool \cite{fnss}. We set $L = 1500$ bytes as the maximum size of any packet at all links. Link and node delays are all set to $L / w_{ij}$. Link capacities are chosen among $\{ 1, 10, 40 \}$ Gbps, according to the link's \emph{edge betweeness} \cite{fnss}. With regards to the flows, all the bursts are set to $\sigma = 3 L$, and the rates $\rho$ are generated from a lognormal distribution with $\mu = 0.8$~Gbps and $\sigma^2 = 0.05$ \cite{fnss}. The flow deadlines $\delta$ are chosen uniformly within the interval $[ \, \delta_{min} \,,\, \delta_{min}+(\delta_{max} - \delta_{min}) \pi \, ]$, where  $\delta_{min}$ and $\delta_{max}$ are a lower and upper bound on the WCD, respectively, and $\pi = 0.2$ is a fixed parameter. All the instances are available for download from \url{https://commalab.di.unipi.it/datasets/mmcf/#FNSS}.

\subsection{Experimental results}

We now report the results comparing our SB approach and {\tt Gurobi} on the different classes of instances. All the tables have the same structure. Each line refers to a specific network topology (column ``Topology'') in the class, reporting the number $n$ of nodes and $m$ of arcs, as well as the number $f$ of flows (i.e., size of the traffic matrix) that were generated for the corresponding topology via {\tt FNSS}. Each flow represents a different (SFSP) DCR instance. Among the $f$ generated flows, column $p_B$ gives the percentage solved to optimality (relative gap lower than {\tt1e-6}) by the SB algorithm. Column TR reports the ratio of the average running time of SB upon all the flows and that of {\tt Gurobi}, so that a value greater than one indicates SB being slower, while a value smaller than one indicates SB being faster. Column $it_B$ reports the average Benders iterations for the SB algorithm. The following columns report information about the quality of the solution obtained by SB:
\begin{itemize}
 \item $g^p$ is the average percentage {\em primal gap} between the SB algorithm and {\tt Gurobi}, {computed as $100\,|u_G-u_B|/u_G$};
 \item $g^d$ is the average percentage  {\em dual gap} between the SB algorithm and {\tt Gurobi} {computed as $100\,(u_G-\ell_B)/u_G$};
\end{itemize}
where $\ell_G$, $u_G$ are the best lower and upper bound found by {\tt Gurobi} (always identical up to {\tt 1e-6} relative), while $\ell_B$, $u_B$ are the best lower and upper bound found by our SB algorithm, respectively. The average is computed only over the instances not solved to optimality; if all are, ``--'' is shown.

\smallskip
\noindent
In Table \ref{tab:garr-results}, we report the results for the {GARR} class. SB solves to optimality all but six topologies, in each of which only among one to three flows get a nonzero gap; and in all cases, the obtained solution is always optimal (the primal gap is null), just the lower bound falls short from proving it with a residual gap of 5-7\%.
Except from five cases, SB is faster than Gurobi by up to a factor of three. Remarkably, in all cases but one, the Benders' approach does just one iteration, i.e., no branching is needed and the gap is closed at the first attempt.

\begingroup
\footnotesize
\setlength{\tabcolsep}{5pt}
\renewcommand{\arraystretch}{1.15}
\begin{longtable}{lrrrrrrrr}
\caption{Results for the GARR network.}\label{tab:garr-results}\\
\toprule
Topology & \multicolumn{1}{c}{$n$} & \multicolumn{1}{c}{$m$} & \multicolumn{1}{c}{$f$} & \multicolumn{1}{c}{$p_{B}$} & \multicolumn{1}{c}{TR} & \multicolumn{1}{c}{$it_{B}$} & \multicolumn{1}{c}{$g^p$} & \multicolumn{1}{c}{$g^d$} \\
\midrule
\endfirsthead
\toprule
Topology & \multicolumn{1}{c}{$n$} & \multicolumn{1}{c}{$m$} & \multicolumn{1}{c}{$f$} & \multicolumn{1}{c}{$p_{B}$} & \multicolumn{1}{c}{TR} & \multicolumn{1}{c}{$it_{B}$} & \multicolumn{1}{c}{$g^p$} & \multicolumn{1}{c}{$g^d$} \\
\midrule
\endhead
\midrule
\multicolumn{9}{r}{\textit{Continued on next page}} \\
\endfoot
\bottomrule
\endlastfoot
Garr199901 & 16 & 36 & 100 & 100 & 0.224 & 1 & \multicolumn{1}{c}{--} & \multicolumn{1}{c}{--} \\
Garr199904 & 23 & 50 & 100 & 100 & 1.661 & 1 & \multicolumn{1}{c}{--} & \multicolumn{1}{c}{--} \\
Garr199905 & 23 & 50 & 100 & 100 & 1.129 & 1 & \multicolumn{1}{c}{--} & \multicolumn{1}{c}{--} \\
Garr200109 & 22 & 48 & 100 & 100 & 1.662 & 1 & \multicolumn{1}{c}{--} & \multicolumn{1}{c}{--} \\
Garr200112 & 24 & 52 & 100 & 100 & 1.633 & 1 & \multicolumn{1}{c}{--} & \multicolumn{1}{c}{--} \\
Garr200212 & 27 & 56 & 100 & 100 & 1.269 & 1 & \multicolumn{1}{c}{--} & \multicolumn{1}{c}{--} \\
Garr200404 & 22 & 48 & 100 & 100 & 0.932 & 3 & \multicolumn{1}{c}{--} & \multicolumn{1}{c}{--} \\
Garr200902 & 54 & 136 & 100 & 100 & 0.561 & 1 & \multicolumn{1}{c}{--} & \multicolumn{1}{c}{--} \\
Garr200908 & 54 & 136 & 100 & 100 & 0.604 & 1 & \multicolumn{1}{c}{--} & \multicolumn{1}{c}{--} \\
Garr200909 & 55 & 138 & 100 & 100 & 0.378 & 1 & \multicolumn{1}{c}{--} & \multicolumn{1}{c}{--} \\
Garr200912 & 54 & 136 & 100 & 100 & 0.327 & 1 & \multicolumn{1}{c}{--} & \multicolumn{1}{c}{--} \\
Garr201001 & 54 & 136 & 100 & 100 & 0.327 & 1 & \multicolumn{1}{c}{--} & \multicolumn{1}{c}{--} \\
Garr201003 & 54 & 136 & 100 & 100 & 0.298 & 1 & \multicolumn{1}{c}{--} & \multicolumn{1}{c}{--} \\
Garr201004 & 54 & 136 & 100 & 100 & 0.291 & 1 & \multicolumn{1}{c}{--} & \multicolumn{1}{c}{--} \\
Garr201005 & 55 & 138 & 100 & 100 & 0.428 & 1 & \multicolumn{1}{c}{--} & \multicolumn{1}{c}{--} \\
Garr201007 & 55 & 138 & 100 & 100 & 0.290 & 1 & \multicolumn{1}{c}{--} & \multicolumn{1}{c}{--} \\
Garr201008 & 55 & 138 & 100 & 100 & 0.284 & 1 & \multicolumn{1}{c}{--} & \multicolumn{1}{c}{--} \\
Garr201010 & 56 & 140 & 100 & 100 & 0.273 & 1 & \multicolumn{1}{c}{--} & \multicolumn{1}{c}{--} \\
Garr201012 & 56 & 140 & 100 & 100 & 0.309 & 1 & \multicolumn{1}{c}{--} & \multicolumn{1}{c}{--} \\
Garr201101 & 56 & 140 & 100 & 100 & 0.478 & 1 & \multicolumn{1}{c}{--} & \multicolumn{1}{c}{--} \\
Garr201102 & 57 & 142 & 100 & 100 & 0.290 & 1 & \multicolumn{1}{c}{--} & \multicolumn{1}{c}{--} \\
Garr201103 & 58 & 144 & 100 & 100 & 0.615 & 1 & \multicolumn{1}{c}{--} & \multicolumn{1}{c}{--} \\
Garr201104 & 59 & 148 & 100 & 97 & 0.406 & 1 & 0.00 & 5.08 \\
Garr201105 & 59 & 148 & 100 & 99 & 0.485 & 1 & 0.00 & 7.36 \\
Garr201107 & 59 & 148 & 100 & 99 & 0.632 & 1 & 0.00 & 7.20 \\
Garr201108 & 59 & 148 & 100 & 99 & 0.438 & 1 & 0.00 & 7.04 \\
Garr201109 & 59 & 148 & 100 & 98 & 0.424 & 1 & 0.00 & 6.41 \\
Garr201110 & 59 & 148 & 100 & 98 & 0.615 & 1 & 0.00 & 6.53 \\
Garr201111 & 60 & 148 & 100 & 100 & 0.456 & 1 & \multicolumn{1}{c}{--} & \multicolumn{1}{c}{--} \\
Garr201112 & 61 & 150 & 100 & 100 & 0.463 & 1 & \multicolumn{1}{c}{--} & \multicolumn{1}{c}{--} \\
Garr201201 & 61 & 150 & 100 & 100 & 0.663 & 1 & \multicolumn{1}{c}{--} & \multicolumn{1}{c}{--} \\
\end{longtable}
\endgroup

\smallskip
\noindent
Table \ref{tab:sndlib-results} paints a similar picture for the {SNDlib} class: SB fails to solve eight topologies (in each case, by five flows or less) and in every case finds the optimal solution; however, the lower bound can be off by up to 22\%. The ratio of the running times is also more variable, with {\tt Gurobi} being faster up to a factor of four in half of the cases, and SB being faster by up to one order of magnitude in the other half. Benders' iterations are similarly very few.

\begingroup
\footnotesize
\setlength{\tabcolsep}{5pt}
\renewcommand{\arraystretch}{1.15}
\begin{longtable}{lrrrrrrrr}
\caption{Results for the SNDlib instances.}\label{tab:sndlib-results}\\
\toprule
Topology & \multicolumn{1}{c}{$n$} & \multicolumn{1}{c}{$m$} & \multicolumn{1}{c}{$f$} & \multicolumn{1}{c}{$p_{B}$} & \multicolumn{1}{c}{TR} & \multicolumn{1}{c}{$it_{B}$} & \multicolumn{1}{c}{$g^p$} & \multicolumn{1}{c}{$g^d$} \\
\midrule
\endfirsthead
\toprule
Topology & \multicolumn{1}{c}{$n$} & \multicolumn{1}{c}{$m$} & \multicolumn{1}{c}{$f$} & \multicolumn{1}{c}{$p_{B}$} & \multicolumn{1}{c}{TR} & \multicolumn{1}{c}{$it_{B}$} & \multicolumn{1}{c}{$g^p$} & \multicolumn{1}{c}{$g^d$} \\
\midrule
\endhead
\midrule
\multicolumn{9}{r}{\textit{Continued on next page}} \\
\endfoot
\bottomrule
\endlastfoot
abilene & 12 & 15 & 31 & 100 & 0.370 & 4 & \multicolumn{1}{c}{--} & \multicolumn{1}{c}{--} \\
atlanta & 15 & 22 & 45 & 100 & 0.650 & 1 & \multicolumn{1}{c}{--} & \multicolumn{1}{c}{--} \\
cost266 & 37 & 57 & 100 & 99 & 1.734 & 3 & 0.00 & 0.08 \\
dfn-bwin & 10 & 45 & 45 & 100 & 2.581 & 1 & \multicolumn{1}{c}{--} & \multicolumn{1}{c}{--} \\
dfn-gwin & 11 & 47 & 53 & 100 & 2.855 & 1 & \multicolumn{1}{c}{--} & \multicolumn{1}{c}{--} \\
di-yuan & 11 & 42 & 58 & 100 & 0.798 & 1 & \multicolumn{1}{c}{--} & \multicolumn{1}{c}{--} \\
france & 25 & 45 & 66 & 100 & 2.762 & 2 & \multicolumn{1}{c}{--} & \multicolumn{1}{c}{--} \\
geant & 22 & 36 & 63 & 100 & 1.040 & 1 & \multicolumn{1}{c}{--} & \multicolumn{1}{c}{--} \\
germany50 & 50 & 88 & 100 & 100 & 1.297 & 1 & \multicolumn{1}{c}{--} & \multicolumn{1}{c}{--} \\
giul39 & 39 & 172 & 100 & 100 & 0.073 & 1 & \multicolumn{1}{c}{--} & \multicolumn{1}{c}{--} \\
india35 & 35 & 80 & 100 & 98 & 2.334 & 1 & 0.00 & 9.97 \\
janos-us & 26 & 84 & 100 & 95 & 0.127 & 1 & 0.00 & 17.12 \\
janos-us-ca & 39 & 122 & 100 & 98 & 0.101 & 1 & 0.00 & 2.51 \\
newyork & 16 & 49 & 89 & 99 & 1.298 & 1 & 0.00 & 10.54 \\
nobel-eu & 28 & 41 & 100 & 100 & 1.707 & 1 & \multicolumn{1}{c}{--} & \multicolumn{1}{c}{--} \\
nobel-germany & 17 & 26 & 51 & 100 & 0.614 & 1 & \multicolumn{1}{c}{--} & \multicolumn{1}{c}{--} \\
nobel-us & 14 & 21 & 24 & 100 & 0.402 & 1 & \multicolumn{1}{c}{--} & \multicolumn{1}{c}{--} \\
norway & 27 & 51 & 100 & 100 & 0.390 & 2 & \multicolumn{1}{c}{--} & \multicolumn{1}{c}{--} \\
pdh & 11 & 34 & 54 & 98 & 0.270 & 5 & 0.00 & 0.18 \\
pioro40 & 40 & 89 & 100 & 100 & 4.354 & 1 & \multicolumn{1}{c}{--} & \multicolumn{1}{c}{--} \\
polska & 12 & 18 & 24 & 96 & 1.469 & 1 & 0.00 & 10.57 \\
sun & 27 & 102 & 100 & 97 & 0.095 & 5 & 0.00 & 22.17 \\
ta2 & 65 & 108 & 100 & 100 & 1.719 & 1 & \multicolumn{1}{c}{--} & \multicolumn{1}{c}{--} \\
\end{longtable}
\endgroup

\smallskip
\noindent
The TopoZoo instances in Table \ref{tab:topo-results} confirm the general trend, save that the majority of flows are not fully solved, in a few cases up to 28\%. Very rarely the upper bound is not exact, and always by a fraction of a percentage point; usually the average error in the lower bound (on the few flows that have one) is also a few percentage points, but cases with up to 40\% error also are present. The average number of Benders' iterations is more variable, reaching 100 in one case, but the majority of flows still terminates in one or very few ones.

\begingroup
\footnotesize
\setlength{\tabcolsep}{4pt}
\renewcommand{\arraystretch}{1.15}
\begin{longtable}{lrrrrrrrrr}
\caption{Results for the Topology Zoo instances.}\label{tab:topo-results}\\
\toprule
Topology & \multicolumn{1}{c}{$n$} & \multicolumn{1}{c}{$m$} & \multicolumn{1}{c}{$f$} & \multicolumn{1}{c}{$p_{B}$} & \multicolumn{1}{c}{TR} & \multicolumn{1}{c}{$it_{B}$} & \multicolumn{1}{c}{$g^p$} & \multicolumn{1}{c}{$g^d$} \\
\midrule
\endfirsthead
\toprule
Topology & \multicolumn{1}{c}{$n$} & \multicolumn{1}{c}{$m$} & \multicolumn{1}{c}{$f$} & \multicolumn{1}{c}{$p_{B}$} & \multicolumn{1}{c}{TR} & \multicolumn{1}{c}{$it_{B}$} & \multicolumn{1}{c}{$g^p$} & \multicolumn{1}{c}{$g^d$} \\
\midrule
\endhead
\midrule
\multicolumn{9}{r}{\textit{Continued on next page}} \\
\endfoot
\bottomrule
\endlastfoot
Agis & 25 & 60 & 100 & 99 & 0.063 & 5 & 0.00 & 13.34 \\
Arpanet196912 & 4 & 8 & 12 & 100 & 0.159 & 1 & \multicolumn{1}{c}{--} & \multicolumn{1}{c}{--} \\
Arpanet19723 & 25 & 56 & 100 & 97 & 0.083 & 9 & 0.00 & 25.26 \\
Arpanet19728 & 29 & 64 & 100 & 98 & 0.075 & 4 & 0.00 & 10.53 \\
AttMpls & 25 & 112 & 100 & 96 & 0.099 & 32 & 0.00 & 6.92 \\
Bellcanada & 48 & 128 & 100 & 96 & 0.429 & 1 & 0.00 & 9.08 \\
Belnet2003 & 23 & 78 & 100 & 100 & 0.113 & 1 & \multicolumn{1}{c}{--} & \multicolumn{1}{c}{--} \\
Belnet2004 & 23 & 78 & 100 & 100 & 0.113 & 1 & \multicolumn{1}{c}{--} & \multicolumn{1}{c}{--} \\
Belnet2005 & 23 & 82 & 100 & 100 & 0.118 & 1 & \multicolumn{1}{c}{--} & \multicolumn{1}{c}{--} \\
Belnet2006 & 23 & 82 & 100 & 100 & 0.095 & 1 & \multicolumn{1}{c}{--} & \multicolumn{1}{c}{--} \\
Bics & 33 & 96 & 100 & 95 & 0.135 & 1 & 0.00 & 15.83 \\
Biznet & 29 & 66 & 100 & 96 & 0.112 & 1 & 0.00 & 7.36 \\
BtEurope & 24 & 74 & 100 & 100 & 0.113 & 4 & \multicolumn{1}{c}{--} & \multicolumn{1}{c}{--} \\
BtNorthAmerica & 36 & 152 & 100 & 97 & 0.107 & 18 & 0.00 & 0.64 \\
Cernet & 41 & 116 & 100 & 100 & 0.165 & 1 & \multicolumn{1}{c}{--} & \multicolumn{1}{c}{--} \\
Cesnet200304 & 29 & 66 & 100 & 98 & 0.215 & 1 & 0.00 & 24.31 \\
Chinanet & 42 & 132 & 100 & 100 & 0.137 & 1 & \multicolumn{1}{c}{--} & \multicolumn{1}{c}{--} \\
Cogentco & 197 & 486 & 100 & 100 & 0.443 & 1 & \multicolumn{1}{c}{--} & \multicolumn{1}{c}{--} \\
Colt & 153 & 354 & 100 & 99 & 0.703 & 1 & 0.00 & 0.92 \\
Columbus & 70 & 170 & 100 & 87 & 0.272 & 84 & 0.00 & 8.09 \\
Cwix & 36 & 82 & 100 & 98 & 0.102 & 1 & 0.00 & 20.70 \\
Deltacom & 113 & 322 & 100 & 95 & 0.481 & 9 & 0.00 & 13.25 \\
DeutscheTelekom & 39 & 124 & 100 & 100 & 0.118 & 1 & \multicolumn{1}{c}{--} & \multicolumn{1}{c}{--} \\
Dfn & 58 & 174 & 100 & 99 & 0.195 & 1 & 0.00 & 3.49 \\
DialtelecomCz & 193 & 302 & 100 & 89 & 0.274 & 58 & 0.00 & 12.66 \\
Digex & 31 & 70 & 100 & 98 & 0.091 & 23 & 0.20 & 7.29 \\
EliBackbone & 20 & 60 & 100 & 92 & 0.123 & 1 & 0.00 & 14.54 \\
Esnet & 68 & 158 & 100 & 96 & 0.413 & 1 & 0.00 & 23.12 \\
Evolink & 37 & 90 & 100 & 100 & 0.222 & 5 & \multicolumn{1}{c}{--} & \multicolumn{1}{c}{--} \\
Geant2001 & 27 & 76 & 100 & 100 & 0.134 & 1 & \multicolumn{1}{c}{--} & \multicolumn{1}{c}{--} \\
Geant2009 & 34 & 104 & 100 & 98 & 0.184 & 1 & 0.00 & 18.21 \\
Geant2012 & 40 & 122 & 100 & 100 & 0.212 & 1 & \multicolumn{1}{c}{--} & \multicolumn{1}{c}{--} \\
Globenet & 67 & 190 & 100 & 99 & 0.575 & 1 & 0.00 & 0.76 \\
Goodnet & 17 & 62 & 100 & 100 & 0.055 & 5 & \multicolumn{1}{c}{--} & \multicolumn{1}{c}{--} \\
Grnet & 37 & 84 & 100 & 98 & 0.135 & 1 & 0.00 & 9.67 \\
GtsCe & 149 & 386 & 100 & 100 & 0.340 & 1 & \multicolumn{1}{c}{--} & \multicolumn{1}{c}{--} \\
GtsPoland & 33 & 74 & 100 & 93 & 0.093 & 5 & 0.00 & 29.15 \\
Heanet & 7 & 22 & 42 & 95 & 0.068 & 1 & 0.00 & 39.88 \\
HiberniaGlobal & 55 & 162 & 100 & 99 & 0.486 & 1 & 0.00 & 19.60 \\
HiberniaNireland & 18 & 42 & 100 & 99 & 0.082 & 1 & 0.00 & 38.39 \\
HiberniaUk & 15 & 30 & 100 & 95 & 0.102 & 11 & 0.18 & 17.75 \\
HiberniaUs & 22 & 58 & 100 & 97 & 0.097 & 1 & 0.00 & 19.09 \\
Highwinds & 18 & 62 & 100 & 98 & 0.159 & 1 & 0.26 & 1.77 \\
Ibm & 18 & 48 & 100 & 100 & 0.117 & 6 & \multicolumn{1}{c}{--} & \multicolumn{1}{c}{--} \\
Iij & 37 & 130 & 100 & 97 & 0.160 & 2 & 0.00 & 18.97 \\
Integra & 27 & 72 & 100 & 98 & 0.123 & 12 & 0.00 & 0.27 \\
Intellifiber & 73 & 190 & 100 & 100 & 0.291 & 1 & \multicolumn{1}{c}{--} & \multicolumn{1}{c}{--} \\
Internetmci & 19 & 66 & 100 & 94 & 0.324 & 67 & 0.00 & 18.02 \\
Interoute & 110 & 292 & 100 & 72 & 0.327 & 81 & 0.00 & 11.21 \\
Intranetwork & 39 & 102 & 100 & 100 & 0.919 & 1 & \multicolumn{1}{c}{--} & \multicolumn{1}{c}{--} \\
Ion & 125 & 292 & 100 & 95 & 0.167 & 12 & 0.00 & 18.27 \\
IowaStatewideFiberMap & 33 & 82 & 100 & 96 & 0.108 & 2 & 0.00 & 23.60 \\
Janetlense & 20 & 68 & 100 & 98 & 0.114 & 1 & 0.00 & 12.94 \\
LambdaNet & 42 & 92 & 100 & 98 & 0.176 & 3 & 0.50 & 6.01 \\
Missouri & 67 & 166 & 100 & 94 & 0.448 & 1 & 0.29 & 7.02 \\
Netrail & 7 & 20 & 42 & 98 & 0.045 & 10 & 0.00 & 32.68 \\
NetworkUsa & 35 & 78 & 100 & 100 & 0.136 & 1 & \multicolumn{1}{c}{--} & \multicolumn{1}{c}{--} \\
Nextgen & 17 & 38 & 100 & 99 & 0.176 & 1 & 0.00 & 53.54 \\
Ntelos & 48 & 116 & 100 & 92 & 0.230 & 2 & 0.00 & 15.12 \\
Ntt & 47 & 126 & 100 & 100 & 0.088 & 1 & \multicolumn{1}{c}{--} & \multicolumn{1}{c}{--} \\
Oteglobe & 93 & 206 & 100 & 100 & 0.318 & 1 & \multicolumn{1}{c}{--} & \multicolumn{1}{c}{--} \\
Palmetto & 45 & 128 & 100 & 89 & 0.240 & 3 & 0.00 & 21.43 \\
Pern & 127 & 258 & 100 & 99 & 1.779 & 1 & 0.00 & 1.46 \\
PionierL1 & 36 & 82 & 100 & 97 & 0.443 & 7 & 0.00 & 17.21 \\
Psinet & 24 & 50 & 100 & 100 & 0.173 & 1 & \multicolumn{1}{c}{--} & \multicolumn{1}{c}{--} \\
Quest & 20 & 62 & 100 & 100 & 0.101 & 1 & \multicolumn{1}{c}{--} & \multicolumn{1}{c}{--} \\
RedBestel & 84 & 186 & 100 & 91 & 0.359 & 52 & 0.00 & 4.47 \\
Rediris & 19 & 62 & 100 & 97 & 0.274 & 34 & 0.00 & 0.05 \\
Renater2010 & 43 & 112 & 100 & 99 & 0.119 & 8 & 0.00 & 21.82 \\
Sanet & 43 & 90 & 100 & 93 & 0.118 & 50 & 0.00 & 15.41 \\
Shentel & 28 & 70 & 100 & 98 & 0.500 & 5 & 0.00 & 15.03 \\
Spiralight & 15 & 32 & 100 & 99 & 0.127 & 3 & \multicolumn{1}{c}{--} & \multicolumn{1}{c}{--} \\
Sunet & 26 & 64 & 100 & 94 & 0.061 & 13 & 0.18 & 17.18 \\
Surfnet & 50 & 136 & 100 & 98 & 0.205 & 1 & 0.00 & 26.00 \\
Switch & 74 & 184 & 100 & 97 & 0.284 & 16 & 0.00 & 13.72 \\
Syringa & 74 & 148 & 100 & 93 & 0.268 & 104 & 0.00 & 0.18 \\
TataNld & 145 & 372 & 100 & 95 & 0.139 & 1 & 0.00 & 16.50 \\
Tw & 76 & 230 & 100 & 82 & 0.731 & 1 & 0.00 & 16.50 \\
Uninett2010 & 74 & 202 & 100 & 100 & 0.214 & 1 & \multicolumn{1}{c}{--} & \multicolumn{1}{c}{--} \\
Uninett2011 & 69 & 192 & 100 & 99 & 0.209 & 1 & 0.00 & 1.55 \\
UsCarrier & 158 & 378 & 100 & 99 & 0.654 & 1 & 0.00 & 4.57 \\
UsSignal & 63 & 156 & 100 & 97 & 0.219 & 1 & 0.23 & 4.73 \\
Uunet & 49 & 168 & 100 & 97 & 0.135 & 58 & 0.00 & 15.43 \\
VtlWavenet2008 & 88 & 184 & 100 & 95 & 0.184 & 46 & 0.00 & 7.16 \\
VtlWavenet2011 & 92 & 192 & 100 & 93 & 0.205 & 41 & 0.00 & 11.65 \\
York & 23 & 48 & 100 & 95 & 0.172 & 14 & 0.00 & 3.30 \\
\end{longtable}
\endgroup

\smallskip
\noindent
Finally, Table \ref{tab:waxman-results} shows the results of the Waxman instances, the larger and therefore more challenging topologies. While only three classes are fully solved, the upper bound is always exact, although the dual gap can still be very significant, up to around 25\%. The number of Benders' iteration is, in most cases, not negligible, reaching the several 100s; however, the running time now very clearly favours SB, which is always faster by at least a factor of 30 and invariably reaching around a factor of 300 on the largest instances.

\begingroup
\footnotesize
\setlength{\tabcolsep}{5pt}
\renewcommand{\arraystretch}{1.15}
\begin{longtable}{lrrrrrrrr}
\caption{Results for the Waxman random instances.}\label{tab:waxman-results}\\
\toprule
Topology & \multicolumn{1}{c}{$n$} & \multicolumn{1}{c}{$m$} & \multicolumn{1}{c}{$f$} & \multicolumn{1}{c}{$p_{B}$} & \multicolumn{1}{c}{TR} & \multicolumn{1}{c}{$it_{B}$} & \multicolumn{1}{c}{$g^p$} & \multicolumn{1}{c}{$g^d$} \\
\midrule
\endfirsthead
\toprule
Topology & \multicolumn{1}{c}{$n$} & \multicolumn{1}{c}{$m$} & \multicolumn{1}{c}{$f$} & \multicolumn{1}{c}{$p_{B}$} & \multicolumn{1}{c}{TR} & \multicolumn{1}{c}{$it_{B}$} & \multicolumn{1}{c}{$g^p$} & \multicolumn{1}{c}{$g^d$} \\
\midrule
\endhead
\midrule
\multicolumn{9}{r}{\textit{Continued on next page}} \\
\endfoot
\bottomrule
\endlastfoot
w1\_100\_04 & 100 & 414 & 100 & 95 & 0.029 & 1 & 0.00 & 20.14 \\
w1\_200\_03 & 200 & 1536 & 100 & 100 & 0.011 & 1 & \multicolumn{1}{c}{--} & \multicolumn{1}{c}{--} \\
w1\_300\_04 & 300 & 3630 & 100 & 92 & 0.008 & 174 & 0.00 & 23.28 \\
w1\_300\_08 & 300 & 3564 & 100 & 90 & 0.008 & 308 & 0.00 & 22.55 \\
w1\_400\_04 & 400 & 6660 & 100 & 86 & 0.004 & 265 & 0.00 & 24.96 \\
w1\_400\_08 & 400 & 6384 & 100 & 100 & 0.011 & 1 & \multicolumn{1}{c}{--} & \multicolumn{1}{c}{--} \\
w1\_500\_04 & 500 & 9738 & 100 & 93 & 0.003 & 328 & 0.00 & 1.39 \\
w1\_500\_08 & 500 & 10148 & 100 & 99 & 0.005 & 1 & 0.00 & 10.07 \\
w1\_600\_04 & 600 & 14156 & 100 & 92 & 0.003 & 148 & 0.00 & 16.42 \\
w1\_600\_08 & 600 & 14116 & 100 & 97 & 0.004 & 1 & 0.00 & 26.60 \\
w1\_700\_04 & 700 & 19476 & 100 & 100 & 0.004 & 1 & \multicolumn{1}{c}{--} & \multicolumn{1}{c}{--} \\
w1\_700\_08 & 700 & 19376 & 100 & 88 & 0.003 & 374 & 0.00 & 21.03 \\
w1\_800\_04 & 800 & 25444 & 100 & 89 & 0.003 & 416 & 0.00 & 27.30 \\
w1\_800\_08 & 800 & 25628 & 100 & 95 & 0.003 & 68 & 0.00 & 23.44 \\
\end{longtable}
\endgroup

\smallskip
\noindent
The results therefore show that, especially on large-scale networks, the SB algorithm always provides extremely accurate feasible solutions, very often (but not always) being able to actually prove their optimality to a very high degree of confidence, in a running time that is at least one order of magnitude, and up to over two orders of magnitude, smaller than that of a state-of-the-art MINLP solver when solving the best known formulation.

\section{Conclusions}\label{sec:thend}

Delay constrained routing (DCR) problems arise frequently in telecomunnication networks. For the single-flow single-path case analysed in this paper (SFSP) DCR, we have shown that it is possible to devise an effective exact solver-free algorithm based on Benders decomposition and spatial branching; the approach is freely available to all interested parties in the context of the open-source {\tt SMS++} modelling framework. There are several interesting topics for future research. First, we would like to exploit the obtained results to tackle the multiflow case in which several flows are routed simultaneously subject to mutual capacity constraints: this is useful to periodically globally optimize a network after a sequence of quick, single-flow greedy decisions. Second, it would be interesting to devise a branch-bound component in order to try to solve to optimality the (relatively few) instances for which the final inherent gap of the SB algorithm is not zero. A possible approach would be to branch on the fractional variables produced at optimality by the Lagrangian dual, but since the approach hinges on shortest path computations, the standard branching rules cannot be applied (finding the shortest path passing from a prescribed set of arcs is $\mathcal{NP}$-hard) and ad-hoc ones should be devised. Also, since there would be two nested enumerative approaches, it is not at all trivial which of the two enumerations should take precedence on the other. Finally, one could consider a robust version of the problem in which the flows arrive in a stochastic manner.

\bibliography{dcrbib}
\bibliographystyle{plain}

\end{document}